\newtheorem{thm}{Theorem}[section]
\newtheorem*{thm*}{Theorem}
\newtheorem{cor}[thm]{Corollary}
\newtheorem{lem}[thm]{Lemma}
\newtheorem*{lem*}{Lemma}
\newtheorem{mainthm}{Theorem}
\newtheorem*{mainthm*}{Theorem}
\newtheorem{prop}[thm]{Proposition}
\theoremstyle{definition}
\newtheorem*{case*}{Case}
\newtheorem*{defn*}{Definition}
\newtheorem*{exmp*}{Example}
\renewcommand{\thestep}{}
\theoremstyle{remark}
\renewcommand{\thecase}{}
\newtheorem{rmk}[thm]{Remark}
\newtheorem*{rmk*}{Remark}
\def\alphenumi{
  \def\theenumi{\alph{enumi}}
  \def\p@enumi{\theenumi}
  \def\labelenumi{(\@alph\c@enumi)}}
\def\thecase{\@arabic\c@case}
\def\thestep{\@arabic\c@step}
\def\hhmm{\number\hh:\ifnum\mm<10{}0\fi\number\mm}
\let\oldmarginpar\marginpar
\renewcommand\marginpar[1]{\-\oldmarginpar[\raggedleft\footnotesize #1]%
{\raggedright\footnotesize #1}}
\newcommand\NN{\mathbb{N}}
\newcommand\RR{\mathbb{R}}
\newcommand\TT{\mathbb{T}}
\newcommand\ZZ{\mathbb{Z}}
\newcommand\fg{{\mathfrak{g}}}
\newcommand\sE{{\mathscr{E}}}
\newcommand\sI{{\mathscr{I}}}
\newcommand\sU{{\mathscr{U}}}
\newcommand\sV{{\mathscr{V}}}
\newcommand\sW{{\mathscr{W}}}
\newcommand\bU{{\mathbf{U}}}
\newcommand\eps{\varepsilon}
\newcommand\GL{\operatorname{GL}}
\newcommand\SO{\operatorname{SO}}
\newcommand\SU{\operatorname{SU}}
\newcommand\ad{{\operatorname{ad}}}
\newcommand\Ad{{\operatorname{Ad}}}
\newcommand\Aut{\operatorname{Aut}}
\newcommand\dist{\operatorname{dist}}
\newcommand\End{\operatorname{End}}
\newcommand{\esssup}{\operatornamewithlimits{ess\ sup}}
\DeclareMathOperator{\genus}{genus}
\DeclareMathOperator{\Inj}{Inj}
\newcommand\Ker{\operatorname{Ker}}
\newcommand\Ric{\operatorname{Ric}}
\newcommand\Riem{\operatorname{Riem}}
\newcommand\vol{\operatorname{vol}}
\newcommand\Vol{\operatorname{Vol}}
\newcommand\apriori{{\emph{a priori }}}
\newcommand\Apriori{{\emph{A priori }}}
\newcommand\id{{\mathrm{id}}}
\newcommand\mutatis{{\emph{mutatis mutandis }}}
\numberwithin{equation}{section}
\begin{document}

\title[Energy gap for Yang--Mills connections, II]{Energy gap for Yang--Mills connections, II: Arbitrary closed Riemannian manifolds}

\author[Paul M. N. Feehan]{Paul M. N. Feehan}
\address{Department of Mathematics, Rutgers, The State University of New Jersey, 110 Frelinghuysen Road, Piscataway, NJ 08854-8019, United States}
\email{feehan@math.rutgers.edu}

\date{July 15, 2019. This version incorporates a correction to a significant error noticed since the publication (May 25, 2017) as \cite{Feehan_yangmillsenergygapflat_aim} and the circulation of our previous arXiv version \cite{Feehan_yangmillsenergygapflat_arxiv_v7} (updated July 10, 2017). The main correction in this version involves replacement of the role of \cite[Corollary 4.3]{UhlChern} due to Uhlenbeck and our \cite[Theorem 5.1]{Feehan_yangmillsenergygapflat_aim} by that of \cite[Theorems 1 and 9]{Feehan_nonlinear_uhlenbeck_estimate} due to the author. We provide a fuller discussion of the changes in our preprint, \emph{Corrigendum to ``Energy gap for Yang--Mills connections, II: Arbitrary closed Riemannian manifolds''}, July 15, 2019.}

\begin{abstract}
We prove an $L^{d/2}$ energy gap result for Yang--Mills connections on principal $G$-bundles, $P$, over arbitrary, closed, Riemannian, smooth manifolds of dimension $d\geq 2$. We apply our version of the {\L}ojasiewicz--Simon gradient inequality \cite{Feehan_yang_mills_gradient_flow_v4}, \cite{Feehan_Maridakis_Lojasiewicz-Simon_coupled_Yang-Mills_v4} to remove a positivity constraint on a combination of the Ricci and Riemannian curvatures in a previous $L^{d/2}$-energy gap result due to Gerhardt \cite[Theorem 1.2]{Gerhardt_2010} and a previous $L^\infty$-energy gap result due to Bourguignon, Lawson, and Simons \cite[Theorem C]{Bourguignon_Lawson_1981},
\cite[Theorem 5.3]{Bourguignon_Lawson_Simons_1979}, as well as an $L^2$-energy gap result due to Nakajima \cite[Corollary 1.2]{Nakajima_1987} for a Yang--Mills connection over the sphere, $S^d$, but with an arbitrary Riemannian metric.
\end{abstract}


\subjclass[2010]{Primary 58E15, 57R57; secondary 37D15, 58D27, 70S15, 81T13}

\keywords{Energy gaps, flat connections, flat bundles, gauge theory, {\L}ojasiewicz--Simon gradient inequality, Morse theory on Banach manifolds, closed Riemannian manifolds, Yang--Mills connections}

\thanks{Paul Feehan was partially supported by National Science Foundation grant DMS-1510064, the Oswald Veblen Fund and Fund for Mathematics (Institute for Advanced Study, Princeton), and the Max Planck Institute for Mathematics, Bonn.}

\maketitle
\tableofcontents

\section{Introduction}
\label{sec:Introduction}

\subsection{Main result}
\label{subsec:Main_result}
The purpose of our article to establish the following

\begin{mainthm}[$L^{d/2}$-energy gap for Yang--Mills connections]
\label{mainthm:Main}
Let $G$ be a compact Lie group and $P$ be a principal $G$-bundle over a closed, smooth manifold, $X$, of dimension $d \geq 2$ and endowed with a smooth Riemannian metric, $g$. Then there is a positive constant, $\eps = \eps(g, G) \in (0, 1]$, with the following significance.
If $A$ is a smooth Yang--Mills connection on $P$ with respect to the metric, $g$, and its curvature, $F_A$, obeys
\begin{equation}
\label{eq:Curvature_Ldover2_small}
\|F_A\|_{L^{d/2}(X)} \leq \eps,
\end{equation}
then $A$ is a flat connection.
\end{mainthm}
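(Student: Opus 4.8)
The plan is to combine three ingredients: an \apriori estimate that upgrades the smallness of $\|F_A\|_{L^{d/2}(X)}$ to smallness in stronger norms, Uhlenbeck's weak compactness theorem to place $A$ (modulo gauge) near a flat connection, and the {\L}ojasiewicz--Simon gradient inequality for the Yang--Mills energy $\YM(A) = \tfrac12\|F_A\|_{L^2(X)}^2$ from \cite{Feehan_yang_mills_gradient_flow_v2,Feehan_Maridakis_Lojasiewicz-Simon_coupled_Yang-Mills} to conclude.

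\textbf{Step 1 (\apriori estimates).} Since $A$ is Yang--Mills, $d_A^\ast F_A = 0$, and the second Bianchi identity gives $d_A F_A = 0$, so $F_A$ solves the elliptic system $(d_A d_A^\ast + d_A^\ast d_A)F_A = 0$. The Bochner--Weitzenböck formula then yields a pointwise inequality of the schematic form $\tfrac12\Delta|F_A|^2 + |\cov_A F_A|^2 \le c(g)\,|F_A|^2 + c\,|F_A|^3$, involving $\Ric$ and $\Riem$ but with \emph{no} sign hypothesis. A Moser iteration (the $\eps$-regularity argument of Uhlenbeck and Nakajima) produces $\eps_0 = \eps_0(d,g,G)$ such that $\|F_A\|_{L^{d/2}(X)} \le \eps_0$ forces $\|F_A\|_{L^\infty(X)} \le C(d,g,G)\|F_A\|_{L^{d/2}(X)}$, and elliptic bootstrapping of the same system gives $\|F_A\|_{C^k(X)} \le C_k\,\|F_A\|_{L^{d/2}(X)}$ for every $k$.

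\textbf{Step 2 (near a flat connection).} Argue by contradiction: if the theorem fails there is a sequence of Yang--Mills connections $A_i$ on $P$ with $\|F_{A_i}\|_{L^{d/2}(X)}\to 0$ and no $A_i$ flat. By Step 1, $\|F_{A_i}\|_{C^k(X)}\to 0$ for all $k$; in particular the smallness of $\|F_{A_i}\|_{L^{d/2}(X)}$ excludes curvature concentration, so Uhlenbeck's compactness theorem provides $u_i \in \cG(P)$ such that, after passing to a subsequence, $u_i(A_i)$ converges in $C^\infty(X)$ to a smooth connection $A_\infty$ on $P$ with $F_{A_\infty}=0$, i.e.\ a flat connection on $P$. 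Let $\cU \subset \cB(P) = \cA(P)/\cG(P)$ be the {\L}ojasiewicz--Simon neighborhood of $[A_\infty]$; since $[A_i] = [u_i(A_i)]\to[A_\infty]$ in the relevant Sobolev topology, $[A_i]\in\cU$ for all large $i$.

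\textbf{Step 3 ({\L}ojasiewicz--Simon and conclusion).} On $\cU$ the gradient inequality reads $|\YM(A) - \YM(A_\infty)|^{1-\theta} \le C\,\|d_A^\ast F_A\|$ in the appropriate dual Sobolev norm, for some $\theta\in(0,\tfrac12]$. Because $A_\infty$ is flat, $\YM(A_\infty)=0$, and because each $A_i$ is Yang--Mills, $d_{A_i}^\ast F_{A_i}=0$; applying the inequality with $A=A_i$ gives $\YM(A_i)^{1-\theta}\le 0$, hence $F_{A_i}=0$, contradicting the choice of $A_i$. The admissible $\eps$ in the statement is any constant no larger than the $\eps_0$ of Step 1 for which the compactness conclusion of Step 2 holds. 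The {\L}ojasiewicz--Simon step is a direct invocation of the cited inequality, so the main obstacle lies in Steps 1 and 2: proving the $\eps$-regularity estimate on an \emph{arbitrary} closed manifold (controlling every Weitzenböck curvature term by Moser iteration without a positivity assumption) and globalizing Uhlenbeck gauge fixing so that the limit is genuinely a flat connection on the same bundle $P$. (If $P$ admits no flat connection, Step 2 shows there is simply no Yang--Mills connection on $P$ with small enough $L^{d/2}$ curvature, consistent with the statement.)
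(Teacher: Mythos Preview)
Your argument is correct and shares the paper's architecture: Step 1 is Corollary \ref{cor:Uhlenbeck_3-5_manifold} (together with the interpolation \eqref{eq:L2normFA_leq_constant_Ldover2normFA_d_leq_4} needed when $d=2,3$), and Step 3 is exactly the same {\L}ojasiewicz--Simon application. The one substantive difference is Step 2: rather than your contradiction-plus-compactness argument, the paper proceeds \emph{directly} via Uhlenbeck's quantitative \cite[Corollary 4.3]{UhlChern} (restated and proved in detail as Theorem \ref{thm:Uhlenbeck_Chern_corollary_4-3}), which for a single $A$ with $\|F_A\|_{L^p(X)}$ small furnishes a flat $\Gamma$ on $P$, a Coulomb gauge $u$, and the explicit bound $\|u(A)-\Gamma\|_{W_\Gamma^{1,p}(X)}\le C\|F_A\|_{L^p(X)}$; this immediately places $A$ in the {\L}ojasiewicz--Simon ball of Corollary \ref{cor:Rade_proposition_7-2_flat}, whose constants are made uniform over the compact moduli space $M(P)$. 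Your softer route sidesteps the explicit distance estimate, since you only need eventual membership in a neighborhood of a single limiting $A_\infty$, and hence you need the {\L}ojasiewicz--Simon constants only at that one flat connection rather than uniformly. One caution, however: your Step 3 requires $u_i(A_i)\to A_\infty$ \emph{strongly} in $W^{1,p}$ for some $p\ge d/2$, which Uhlenbeck's weak compactness \cite[Theorem 1.5]{UhlLp} alone does not give --- to justify the claimed $C^\infty$ convergence you are implicitly using the higher $C^k$ curvature bounds of Step 1 together with the same local-Coulomb-gauge-plus-patching mechanism (\cite[Section 3]{UhlLp}, spelled out in the paper as Proposition \ref{prop:Uhlenbeck_1982Lp_3-2} and Corollary \ref{cor:Uhlenbeck_1982Lp_3-3}) that underlies Theorem \ref{thm:Uhlenbeck_Chern_corollary_4-3}. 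So the saving over the paper's route is more organizational than technical.
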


The notation in Theorem \ref{mainthm:Main} and throughout our Introduction is standard \cite{DK, FU, FrM}, but explained in Section \ref{sec:Preliminaries}. The quantity appearing in \eqref{eq:Curvature_Ldover2_small},
$\|F_A\|_{L^{d/2}(X)}$, depends on the Riemannian metric, $g$, only through its conformal equivalence class.

Previous energy gap results for Yang--Mills connections \cite{Bourguignon_Lawson_1981, Bourguignon_Lawson_Simons_1979, Dodziuk_Min-Oo_1982, DK, Gerhardt_2010, Min-Oo_1982, ParkerGauge} all required some positivity hypothesis on the curvature tensor, $\Riem_g$, of a Riemannian metric, $g$, on the manifold, $X$. Nakajima has established an $L^2$-energy gap result \cite[Corollary 1.2]{Nakajima_1987} for a Yang--Mills connection over $S^d$, but with an arbitrary Riemannian metric. His method employs a pointwise local decay estimate for a Yang--Mills connection established with the aid of a version of the monotonicity formula due to Price \cite{Price_1983}, extending earlier pointwise local decay estimates due to Uhlenbeck \cite{UhlRem} in dimension four.

The intuition underlying our proof of Theorem \ref{mainthm:Main} is rather that an energy gap must exist because otherwise one could have non-minimal Yang--Mills connections with $L^{d/2}$-energy arbitrarily close to zero and this should violate the analyticity of the Yang--Mills $L^2$-energy function, as manifested in the {\L}ojasiewicz--Simon gradient inequality established by the author for $d=2,3,4$ in \cite[Theorem 23.17]{Feehan_yang_mills_gradient_flow_v4}, by the author and Maridakis in \cite[Theorem 3]{Feehan_Maridakis_Lojasiewicz-Simon_coupled_Yang-Mills_v4} for arbitrary $d \geq 2$, and by R\r{a}de in \cite[Proposition 7.2]{Rade_1992} when $d=2,3$. The two other crucial ingredients in the proof of Theorem \ref{mainthm:Main} are Theorem \ref{thm:Uhlenbeck_3-5}, due to Uhlenbeck \cite[Theorem 3.5]{UhlRem}, and Theorem \ref{thm:Uhlenbeck_Chern_corollary_4-3}, due to the author \cite[Theorem 9]{Feehan_nonlinear_uhlenbeck_estimate} and correcting an earlier result due to Uhlenbeck \cite[Corollary 4.3]{UhlChern}, which we had attempted to reprove as \cite[Theorem 5.1]{Feehan_yangmillsenergygapflat_aim}. We refer the reader to Section \ref{subsec:Outline} for an outline of the proof of Theorem \ref{mainthm:Main}.

The existence of non-minimal Yang--Mills connections when $d=4$ was proved by Sibner, Sibner, and Uhlenbeck \cite{SibnerSibnerUhlenbeck} for the case of $X=S^4$ with its standard round metric of radius one, $G = \SU(2)$, and $P = S^4\times\SU(2)$.

In the setting of four-dimensional manifolds, the author \cite{Feehan_yangmillsenergygap} established $L^2$-energy gap results for Yang--Mills connections that also do \emph{not} require any positivity hypothesis on $\Riem_g$. Our previous results \cite[Theorem 1 and Corollary 2]{Feehan_yangmillsenergygap} replace the condition \eqref{eq:Curvature_Ldover2_small} by
\[
\|F_A^+\|_{L^2(X)} \leq \eps,
\]
and conclude that $F_A^+ \equiv 0$ on $X$ and $A$ is necessarily anti-self-dual with respect to the metric $g$ (and thus an absolute minimum of the Yang--Mills $L^2$-energy function). By reversing orientations on $X$, one obtains the analogous conclusion that $F_A^- \equiv 0$ on $X$ and $A$ is necessarily self-dual when $F_A^-$ is $L^2$-small. However, \cite[Corollary 2]{Feehan_yangmillsenergygap} does require that $g$ is generic in the sense of \cite{DK, FU} and that $G$, $P$ and $X$ obey at least one of three combinations of mild conditions involving the topology of $P$ and $X$, the representation variety of $\pi_1(X)$ in $G$, the choice of $G$, and the non-existence of flat connections on $P$. Our Theorem \ref{mainthm:Main} extends the main results of our companion article \cite{Feehan_yangmillsenergygap} to the case of arbitrary dimensions $d \geq 2$.

\subsection{Comparison with previous Yang--Mills energy gap results}
\label{subsec:Comparison_previous_Yang-Mills_energy_gap_results}
It is natural to separately consider the case of manifolds of arbitrary dimension $d \geq 2$ and the case $d =4$.

\subsubsection{Riemannian manifolds of arbitrary dimension $d \geq 2$}
\label{subsubsec:Riemannian_manifolds_arbitrary_dimension}
In \cite[Theorem 5.3]{Bourguignon_Lawson_Simons_1979}, Bourguignon, Lawson, and Simons asserted that if $d \geq 3$ and $X$ is the $d$-dimensional sphere, $S^d$, with its standard round metric of radius one, and $A$ is a Yang--Mills connection on a principal $G$-bundle, $P$, over $S^d$ such that
\begin{equation}
\label{eq:Linfinity_energy_gap_sphere}
\|F_A\|_{L^\infty(S^d)}^2 < \frac{1}{2}\binom{d}{2},
\end{equation}
then $A$ is flat. A detailed proof of this gap result is provided by Bourguignon and Lawson in
\cite[Theorem 5.19]{Bourguignon_Lawson_1981} for $d \geq 5$, \cite[Theorem 5.20]{Bourguignon_Lawson_1981} for $d=4$ (by combining the cases of $L^\infty$-small $F_A^+$ and $F_A^-$), and
\cite[Theorem 5.25]{Bourguignon_Lawson_1981} for $d=3$. (The results for the cases $d \geq 5$, $d=4$, and $d=3$ are combined in their \cite[Theorem C]{Bourguignon_Lawson_1981}.) These gap results are proved with the aid of the \emph{Bochner--Weitzenb\"ock formula}
\cite[Theorem 3.10 and Equation (5.1)]{Bourguignon_Lawson_1981} (compare \cite[Corollary II.3]{Lawson}),
\begin{equation}
\label{eq:Bourguignon_Lawson_1981_Theorem 3-10_and_Equation_5-1_Hodge_Laplacian}
\Delta_Av = \nabla_A^*\nabla_Av + v \circ (\Ric_g \wedge I + 2\Riem_g)
+ \{F_A, v\}, \quad\forall\, v \in \Omega^2(X; \ad P),
\end{equation}
for the \emph{Hodge Laplacian},
\begin{equation}
\label{eq:Lawson_page_93_Hodge_Laplacian}
\Delta_A := d_A^*d_A + d_Ad_A^* \quad\hbox{on } \Omega^2(X;\ad P),
\end{equation}
where $\Riem_g$ is the Riemann curvature tensor, $\Ric_g$ is the Ricci curvature tensor defined by $g$ and $\{F_A, \cdot\}: \Omega^2(X; \ad P) \to \Omega^2(X; \ad P)$ is defined in
\cite[Equation (3.7)]{Bourguignon_Lawson_1981}, so $\{F_A,v\}$ is a bilinear, pointwise, universal combination of $F_A$ and $v \in \Omega^2(X; \ad P)$, the operation $\circ$ is defined in
\cite[Equation (3.8)]{Bourguignon_Lawson_1981}, and $\Ric_g \wedge I$  is defined in
\cite[Equation (3.9)]{Bourguignon_Lawson_1981}. Thus,
\begin{multline*}
(\Ric_g \wedge I + 2\Riem_g)_{\xi_1,\xi_2}\xi_3
=
\Ric_g(\xi_1)\xi_3 \wedge \xi_2 + \xi_1\wedge \Ric_g(\xi_2)\xi_3 + 2\Riem_g(\xi_1,\xi_2)\xi_3
\in
C^\infty(TX),
\\
\quad\forall\, \xi_1, \xi_2, \xi_3 \in C^\infty(TX).
\end{multline*}
When $X=S^d$ with its standard round metric of radius one, then \cite[Corollary 3.14]{Bourguignon_Lawson_1981}
\[
(\Ric_g \wedge I + 2\Riem_g)_{\xi_1,\xi_2} = 2(d-2)\xi_1\wedge \xi_2,
\quad \forall\, \xi_1, \xi_2 \in C^\infty(TS^d).
\]
In the penultimate paragraph prior to the statement of their
\cite[Theorem 5.26]{Bourguignon_Lawson_1981}, Bourguignon and Lawson imply that these gap results continue to hold for a closed manifold, $X$, if the operator $\Ric_g \wedge I + 2\Riem_g$ has a positive least eigenvalue,
\begin{equation}
\label{eq:Bourguignon_Lawson_1981_equation_5-1_BW_Riemann_curvature_positivity}
\Ric_g \wedge I + 2\Riem_g \geq \lambda_g > 0,
\end{equation}
and the condition \eqref{eq:Linfinity_energy_gap_sphere} is generalized to
\begin{equation}
\label{eq:Linfinity_energy_gap_manifold}
\|F_A\|_{L^\infty(X)}^2 < \frac{1}{16}\frac{d(d-1)}{(d-2)^2}\lambda_g^2.
\end{equation}
This observation of Bourguignon and Lawson was improved by Gerhardt as \cite[Theorem 1.2]{Gerhardt_2010} by replacing the $L^\infty$ condition \eqref{eq:Linfinity_energy_gap_manifold} with
\begin{equation}
\label{eq:Ldover2_energy_gap_Gerhardt}
\|F_A\|_{L^{d/2}(X)} < \eps_0,
\end{equation}
for a positive constant, $\eps_0$, depending at most on $\lambda_g$, the Sobolev constant of $(X,g)$ for the embedding $W^{1,2}(X) \subset L^{2d/(d-2)}(X)$ (from \cite[Equation (2.26)]{Gerhardt_2010}), $d$, and the dimension of the Lie group, $G$. This result was also extended by him to the case where $(X,g)$ is a complete, non-compact manifold \cite[Theorem 1.3]{Gerhardt_2010}.

The positivity condition \eqref{eq:Bourguignon_Lawson_1981_equation_5-1_BW_Riemann_curvature_positivity} is assured if the (self-adjoint) curvature operator \cite[Section 1]{Hamilton_1986}, \cite[Section 3.1.2]{Petersen_riemannian_geometry3},
\begin{equation}
\label{eq:Curvature_operator}
\Riem_g:\Lambda_x^2 \to \Lambda_x^2,
\end{equation}
defined by the Riemannian metric, $g$, is positive at each point $x \in X$ \cite[p. 74]{Bourguignon_Karcher_1978}. (Here, we denote $\Lambda_x^2 = \Lambda^2(T_xX)$.) For such a metric, it is known that $X$ must be a real homology sphere by a theorem of Gallot and Meyer
\cite[Theorem A.5]{Chavel}, \cite{GallotMeyer}. Hence, the manifolds where one can apply the energy gap results of Bourguignon, Lawson, and Simons \cite{Bourguignon_Lawson_1981, Bourguignon_Lawson_Simons_1979} and Gerhardt \cite{Gerhardt_2010} have very strong constraints on their topology.

For a principal $G$-bundle over a closed, smooth manifold, $X$, with an arbitrary Riemannian metric, $g$, T. Huang \cite{Huang_2015arxiv} has proved that if $P$ admits a Yang--Mills connection $A$ whose curvature obeys \eqref{eq:Ldover2_energy_gap_Gerhardt}, then $P$ admits some flat connection, $\Gamma$. While T. Huang also published a proof of Theorem \ref{mainthm:Main} in \cite{Huang_2017}, his argument relied on the estimate in \cite[Corollary 4.3]{UhlChern} due to Uhlenbeck and which is incorrect as we explain in \cite[Appendix A]{Feehan_nonlinear_uhlenbeck_estimate}.

\subsubsection{Four-dimensional Riemannian manifolds}
\label{subsubsec:Four-dimensional_Riemannian_manifolds}
When $X$ is the four-dimensional sphere, $S^4$, with its standard round metric of radius one, the energy gap result of Bourguignon and Lawson \cite[Theorem C]{Bourguignon_Lawson_1981} was improved by Donaldson and Kronheimer \cite[Lemma 2.3.24]{DK} by relaxing the $L^\infty$ condition \eqref{eq:Linfinity_energy_gap_sphere} to the $L^2$ condition \eqref{eq:Curvature_Ldover2_small} (with $d=4$).

When $d=4$, more refined gap results have been established, based on the splitting
\cite[Sections 1.1.5, 1.1.6, and 2.1.3]{DK}
of two-forms into anti-self-dual and self-dual two forms, $\Omega^2(X) = \Omega^+(X)\oplus\Omega^-(X)$, and
\[
\Omega^2(X; \ad P) = \Omega^+(X; \ad P)\oplus \Omega^-(X; \ad P),
\]
and the resulting Bochner--Weitzenb\"ock formulae for the restrictions of the Hodge Laplacian,
\[
d_Ad_A^* + d_A^*d_A = 2d_A^\pm d_A^{\pm,*} \quad\text{on } \Omega^\pm(X; \ad P),
\]
namely \cite[Equation (6.26) and Appendix C, p. 174]{FU}, \cite[Equation (5.2)]{GroisserParkerSphere},
\begin{equation}
\label{eq:Freed_Uhlenbeck_6-26}
2d_A^+d_A^{+,*}v = \nabla_A^*\nabla_Av + \left(\frac{1}{3}R_g - 2w_g^+\right)v + \{F_A^+, v\},
\quad\forall\, v \in \Omega^+(X; \ad P),
\end{equation}
with the analogous formula for $2d_A^-d_A^{-,*}v$ when $v \in \Omega^-(X; \ad P)$.

In \cite[Theorem 5.4]{Bourguignon_Lawson_Simons_1979}, Bourguignon, Lawson, and Simons asserted that if $X$ is the sphere, $S^4$, with its standard round metric of radius one, and $A$ is a Yang--Mills connection on a principal $G$-bundle, $P$, over $S^4$ such that
\begin{equation}
\label{eq:Linfinity_energy_gap_4sphere}
\|F_A^+\|_{L^\infty(S^4)}^2 < 3,
\end{equation}
then $F_A^+ \equiv 0$ on $S^4$ and $A$ is anti-self-dual. By reversing orientations on $S^4$, one obtains the analogous conclusion that $F_A^- \equiv 0$ on $S^4$ and $A$ is necessarily self-dual when $\|F_A^-\|_{L^\infty(S^4)}^2 < 3$. A detailed proof of this gap result is provided by Bourguignon and Lawson \cite[Theorem 5.20]{Bourguignon_Lawson_1981}. (The result is also quoted as \cite[Theorem D]{Bourguignon_Lawson_1981}.)

More generally, for a smooth Riemannian metric, $g$, on a four-dimensional, oriented manifold, $X$, let $R_g(x)$ denote its scalar curvature at a point $x \in X$ and let $\sW_g^\pm(x) \in \End(\Lambda_x^\pm)$ denote its self-dual and anti-self-dual Weyl curvature tensors at $x$, where $\Lambda_x^2 = \Lambda_x^+\oplus \Lambda_x^-$. Define
$$
w_g^\pm(x) := \text{Largest eigenvalue of } \sW_g^\pm(x), \quad\forall\, x \in X.
$$
Bourguignon and Lawson prove \cite[Theorem 5.26]{Bourguignon_Lawson_1981} that if $X$ is a closed, four-dimensional, oriented, smooth manifold endowed with a Riemannian metric, $g$, with vanishing self-dual Weyl curvature ($\sW_g^+ \equiv 0$ on $X$), positive scalar curvature, $R_g > 0$ on $X$, and $A$ is a Yang--Mills connection on a principal $G$-bundle, $P$, over $X$ whose curvature, $F_A$, obeys the pointwise bound,
\begin{equation}
\label{eq:Linfinity_energy_gap_4manifold}
|F_A^+| < \frac{R_g}{4} \quad\text{on } X,
\end{equation}
then $F_A^+ \equiv 0$ on $X$ and $A$ is anti-self-dual with respect to the metric, $g$. By reversing orientations on $X$, one obtains the analogous conclusion that $F_A^- \equiv 0$ on $X$ and $A$ is necessarily self-dual with respect to the metric, $g$, when $|F_A^-| < R_g/4$ on $X$.

The result \cite[Theorem 5.26]{Bourguignon_Lawson_1981} due to Bourguignon and Lawson was extended by Min-Oo \cite[Theorem 2]{Min-Oo_1982} and Parker \cite[Proposition 2.2]{ParkerGauge}, in the sense that the pointwise condition \eqref{eq:Linfinity_energy_gap_4manifold} and assumption that $\sW_g^+ \equiv 0$ on $X$ were relaxed to the $L^2$-energy condition,
\begin{equation}
\label{eq:Minoo_theorem_2}
\|F_A^+\|_{L^2(X)} \leq \eps,
\end{equation}
for a closed manifold, $X$, for which $\Riem_g$ obeys the positivity condition,
\begin{equation}
\label{eq:Freed_Uhlenbeck_page_174_positive_metric_self-dual}
\frac{1}{3}R_g - 2w_g^+ > 0 \quad\hbox{on } X.
\end{equation}
As usual, the analogous conclusion for that $F_A^{-,g} \equiv 0$ on $X$ when $A$ is a Yang--Mills connection with $L^2$-small enough $F_A^{-,g}$ and $\Riem_g$ obeys the positivity condition,
\begin{equation}
\label{eq:Freed_Uhlenbeck_page_174_positive_metric_anti-self-dual}
\frac{1}{3}R_g - 2w_g^- > 0 \quad\hbox{on } X,
\end{equation}
follows by reversing orientations on $X$.

Unfortunately, the hypothesis \eqref{eq:Freed_Uhlenbeck_page_174_positive_metric_self-dual} also imposes strong constraints on the topology of $X$, as the Bochner--Weitzenb\"ock formula \eqref{eq:Freed_Uhlenbeck_6-26} implies that the dimension of the vector space of harmonic, real, self-dual two-forms is zero. Hence, $b^+(X) = 0$ and the bilinear intersection form, $Q$ on the cohomology group, $H^2(X;\ZZ)$, is negative definite \cite[Section 1.1.6]{DK}.

As we already described, we have extended the result \cite[Theorem 2]{Min-Oo_1982} of Min-Oo and \cite[Proposition 2.2]{ParkerGauge} of Parker in our \cite[Theorem 1 and Corollary 2]{Feehan_yangmillsenergygap} by removing the positivity conditions \eqref{eq:Freed_Uhlenbeck_page_174_positive_metric_self-dual} and \eqref{eq:Freed_Uhlenbeck_page_174_positive_metric_anti-self-dual}.

Extensions of \cite[Theorem 2]{Min-Oo_1982}, \cite[Proposition 2.2]{ParkerGauge} to the case where $(X,g)$ is a complete, non-compact, oriented Riemannian, smooth manifold have been obtained by Dodziuk and Min-Oo \cite{Dodziuk_Min-Oo_1982}, Shen \cite{Shen_1982}, and Xin \cite{Xin_1984}.

\subsection{Further research}
\label{subsec:Further_research}
We discuss possible extensions of Theorem \ref{mainthm:Main} and potential applications of our method of proof to other problems in geometric analysis and mathematical physics.

\subsubsection{Complete non-compact Riemannian manifolds}
\label{subsubsec:Further_research_complete}
It is possible that Theorem \ref{mainthm:Main} might extend to the setting of complete, non-compact Riemannian manifolds (that do not admit conformal compactifications), thus generalizing the previous results in this setting due to Dodziuk and Min-Oo \cite{Dodziuk_Min-Oo_1982}, Gerhardt \cite{Gerhardt_2010}, Shen \cite{Shen_1982}, and Xin \cite{Xin_1984}. However, it is likely that any such extensions would be fairly technical in nature. One obstacle lies in the required generalization of the {\L}ojasiewicz--Simon gradient inequality from the setting of compact to complete manifolds and that would probably entail restrictions on the allowable ends of $X$, such as the cylindrical ends employed by Morgan, Mrowka, and Ruberman \cite{MMR} and Taubes \cite{TauL2}, together with their use of weighted Sobolev spaces adapted to such cylindrical ends \cite{Lockhart_McOwen_1985}.

\subsubsection{Adaptation of the gradient inequality paradigm to other problems in geometric analysis and mathematical physics}
\label{subsubsec:Further_research_geometric_analysis_and_math_physics}
Energy gap or quantization results are not confined to the realm of Yang--Mills gauge theory, as evidenced by recent results of Bernard and Rivi{\'e}re \cite{Bernard_Riviere_2014} on Willmore surfaces (critical points of the Willmore energy function), older results on harmonic maps, such those of Xin \cite{Xin_1980}, and elsewhere. The {\L}ojasiewicz--Simon gradient inequality, originally due to Simon \cite{Simon_1983}, has now been established in great generality (see S.-Z. Huang \cite{Huang_2006} or our monograph \cite{Feehan_yang_mills_gradient_flow_v4} for surveys and references), so it is reasonable to expect that the methods of our article may extend beyond their present context in Yang--Mills gauge theory, particularly in situations where previous results have relied on Bochner--Weitzenb\"ock formulae and positive curvature hypotheses. While analyticity of the energy function is required by the {\L}ojasiewicz--Simon gradient inequality, there are other gradient inequalities which do \emph{not} require analyticity \cite{Huang_2006}.

\subsection{Outline}
\label{subsec:Outline}
In Section \ref{sec:Preliminaries}, we establish our notation and recall basic definitions in gauge theory over Riemannian manifolds required for the remainder of this article. Section \ref{sec:Flat_connections} reviews essential background material concerning flat connections on a principal $G$-bundles, including Uhlenbeck compactness of the moduli space of flat connections in Section \ref{subsec:Uhlenbeck_compactness_moduli_space_flat_connections} and a special case (Corollary \ref{cor:Rade_proposition_7-2_flat}) of our {\L}ojasiewicz--Simon gradient inequality for the Yang--Mills $L^2$-energy function \cite[Theorem 23.17]{Feehan_yang_mills_gradient_flow_v4}, \cite[Theorem 3]{Feehan_Maridakis_Lojasiewicz-Simon_coupled_Yang-Mills_v4}. In Section \ref{sec:Uhlenbeck_Ldover2_small_curvature_and_estimate_curvature_Yang-Mills_connection}, we
recall results due to Uhlenbeck concerning an \apriori estimate for the curvature of a Yang--Mills connection \cite{UhlRem} and the existence of a local Coulomb gauge for a connection, $A$, with $L^{d/2}$-small curvature, $F_A$ \cite{UhlLp}. Section \ref{sec:Uhlenbeck_approach_existence_flat_connection_and_apriori_estimate} contains the statement of our Theorem \ref{thm:Uhlenbeck_Chern_corollary_4-3}, correcting the statement of \cite[Corollary 4.3]{UhlChern} due to Uhlenbeck and our previous \cite[Theorem 5.1]{Feehan_yangmillsenergygapflat_aim}. Theorem \ref{thm:Uhlenbeck_Chern_corollary_4-3} provides existence of a flat connection, $\Gamma$, on $P$ given a Sobolev connection on $P$ with $L^p$-small curvature (when $p > (\dim X)/2$), a global gauge transformation, $u$, of $A$ to Coulomb gauge with respect to $\Gamma$, and a Sobolev norm estimate for the distance between $A$ and $\Gamma$. We complete the proof of Theorem \ref{mainthm:Main} in Section \ref{sec:Completion_proof_main_theorem}.

Appendix \ref{app:Alternative_proofs_simplifying_hypotheses}
contains proofs (or summaries) of several results described in this article that simplify considerably under the assumption of additional hypotheses, including Theorem \ref{mainthm:Main} in Section \ref{subsec:BW_formula_existence_Ldover2_energy_gap} (under a certain positive curvature hypothesis); the estimates \eqref{eq:Uhlenbeck_Chern_corollary_4-3_uA-Gamma_W1p_bound} of the Sobolev distance between $A$ and a flat connection, $\Gamma$, in Section \ref{subsec:Estimate_Sobolev_distance_flat_connection_Hodge_Laplacian_vanishing_kernel} (under the hypothesis that the Hodge Laplacian for $\Gamma$ on $\Omega^2(X;\ad P)$ has vanishing kernel); and the first part of Theorem \ref{thm:Uhlenbeck_Chern_corollary_4-3} in Section \ref{subsec:Yang_existence_flat_connection} (existence of a flat connection under the hypothesis that $P$ supports a smooth connection with $L^\infty$-small curvature).

Appendix \ref{sec:Corrections} contains a list of the mathematical corrections to the previously published version \cite{Feehan_yangmillsenergygapflat_aim} of our article that are provided in this version. The most significant correction involves replacement of the role of \cite[Corollary 4.3]{UhlChern} due to Uhlenbeck and our previous \cite[Theorem 5.1]{Feehan_yangmillsenergygapflat_aim} by that of \cite[Theorems 1 and 9]{Feehan_nonlinear_uhlenbeck_estimate} due to the author. We provide a fuller discussion of these corrections in \cite{Feehan_yangmillsenergygapflat_corrigendum}.

\subsection{Acknowledgments}
\label{subsec:Acknowledgments}
I am very grateful to the Max Planck Institute for Mathematics, Bonn, for their hospitality during the preparation of this article. I would also like to thank Blaine Lawson for kind comments regarding this and its companion article \cite{Feehan_yangmillsenergygap}, Hiraku Nakajima for helpful communications, and Karen Uhlenbeck for helpful comments regarding her article \cite{UhlChern}, Changyou Wang for alerting me to subtleties particular to dimension two, and Baozhong Yang for helpful comments regarding his article \cite{Yang_2003pjm}. This correction to the previously published version \cite{Feehan_yangmillsenergygapflat_aim} of our article owes its existence to Tom Mrowka and his description of a counterexample \cite{Mrowka_7-30-2018} to exponential convergence (implied by our \cite[Theorem 2 and Remark 1.10]{Feehan_lojasiewicz_inequality_ground_state_v1}) for Yang--Mills gradient flow near the moduli space of flat connections. I am indebted to him and very grateful to Mariano Echeverria, Kenji Fukaya, Takeo Nishinou, Tom Parker, Nikolai Saveliev, Penny Smith, Katrin Wehrheim, and Graeme Wilkin for helpful communications during the preparation of this correction to \cite{Feehan_yangmillsenergygapflat_aim}. 

\section{Preliminaries}
\label{sec:Preliminaries}
We shall generally adhere to the now standard gauge-theory conventions and notation of Donaldson and Kronheimer \cite{DK}, Freed and Uhlenbeck \cite{FU}, and Friedman and Morgan \cite{FrM}; those references and our monograph \cite{Feehan_yang_mills_gradient_flow_v4} also provide the necessary background for our article.

Throughout our article, $G$ denotes a compact Lie group and $P$ a smooth principal $G$-bundle over a closed, smooth manifold, $X$, of dimension $d \geq 2$ and endowed with Riemannian metric, $g$. We denote $\Lambda^l := \Lambda^l(T^*X)$ for integers $l\geq 1$ and $\Lambda^0 = X\times\RR$, and let\footnote{We follow the notational conventions of Friedman and Morgan \cite[p. 230]{FrM}, where they define $\ad P$ as we do here and define $\Ad P$ to be the group of automorphisms of the principal $G$-bundle, $P$.}
$\ad P := P\times_{\ad}\fg$ denote the real vector bundle associated to $P$ by the adjoint representation of $G$ on its Lie algebra,
$\Ad:G \ni u \to \Ad_u \in \Aut\fg$. We fix a $G$-invariant inner product on the Lie algebra $\fg$ and thus define a fiber metric on $\ad P$. (When $G$ is semi-simple, one may use the Killing form to define a $G$-invariant inner product $\fg$.) Given a $C^\infty$ reference connection, $A_1$, on $P$, we let
\begin{align*}
\nabla_{A_1}: C^\infty(X;\Lambda^l\otimes\ad P) &\to C^\infty(X; T^*X\otimes \Lambda^l\otimes\ad P),
\\
d_{A_1}: C^\infty(X; \Lambda^l\otimes\ad P) &\to C^\infty(X; \Lambda^{l+1}\otimes\ad P), \quad l \in \NN,
\end{align*}
denote the \emph{covariant derivative} \cite[Equation (2.1.1)]{DK} and \emph{exterior covariant derivative} \cite[Equation (2.1.12)]{DK} associated with $A_1$, respectively. We write the set of non-negative integers as $\NN$ and abbreviate $\Omega^l(X; \ad P) :=  C^\infty(X; \Lambda^l\otimes\ad P)$, the Fr\'echet space of $C^\infty$ sections of $\Lambda^l\otimes\ad P$.

We denote the Banach space of sections of $\Lambda^l\otimes\ad P$ of Sobolev class $W^{k,q}$, for any $k\in \NN$ and $q \in [1,\infty]$, by $W_{A_1}^{k,q}(X; \Lambda^l\otimes\ad P)$, with norm,
\[
\|\phi\|_{W_{A_1}^{k,q}(X)} := \left(\sum_{j=0}^k \int_X |\nabla_{A_1}^j\phi|^q\,d\vol_g \right)^{1/q},
\]
when $1\leq q<\infty$ and
\[
\|\phi\|_{W_{A_1}^{k,\infty}(X)} := \sum_{j=0}^k \esssup_X |\nabla_{A_1}^j\phi|,
\]
otherwise, where $\phi \in W_{A_1}^{k,q}(X; \Lambda^l\otimes\ad P)$.

We define the \emph{Yang--Mills $L^2$-energy function} by
\begin{equation}
\label{eq:Yang-Mills_energy_functional}
\sE_g(A)  := \frac{1}{2}\int_X |F_A|^2\,d\vol_g,
\end{equation}
where $A$ is a connection on $P$ of Sobolev class $W^{k,q}$ and curvature \cite[Equation (2.1.13)]{DK},
\[
F_A = d_A\circ d_A \in W^{k-1,q}(X; \Lambda^2\otimes\ad P).
\]
To ensure that the integral \eqref{eq:Yang-Mills_energy_functional} is well-defined, we require that $k\geq 1$ and $q \geq 1$ obey
\begin{inparaenum}[(\itshape i\upshape)]
\item $q\geq 2$ if $k=1$, or
\item $q^* \equiv (k-1)q/(d-(k-1)q) \geq 2$ if $k\geq 2$ and $(k-1)q<d$,
\item $k\geq 2$ and $(k-1)q \geq d$,
\end{inparaenum}
so in each case $W^{k-1,q}(X;\RR) \subset L^2(X;\RR)$ by the Sobolev Embedding \cite[Theorem 4.12]{AdamsFournier}.

A connection, $A$ on $P$, is a \emph{critical point} of $\sE_g$ --- and by definition a \emph{Yang--Mills connection} with respect to the metric $g$ --- if and only if it obeys the \emph{Yang--Mills equation} with respect to the metric $g$,
\[
d_A^{*,g}F_A = 0 \quad\hbox{a.e. on } X,
\]
since $d_A^{*,g}F_A = \sE_g'(A)$ when the gradient of $\sE = \sE_g$ is defined by the $L^2$ metric \cite[Section 6.2.1]{DK} and $d_A^* = d_A^{*,g}: \Omega^l(X; \ad P) \to \Omega^{l-1}(X; \ad P)$ is the $L^2$ adjoint of  $d_A:\Omega^l(X; \ad P) \to \Omega^{l+1}(X; \ad P)$. By contrast, the curvature, $F_A$, of a connection always obeys the \emph{Bianchi identity} \cite[Equation (2.1.21)]{DK},
\[
d_AF_A = 0 \quad\hbox{a.e. on } X.
\]
In the sequel, constants are generally denoted by $C$ (or $C(*)$ to indicate explicit dependencies) and may increase from one line to the next in a series of inequalities. We write $\eps \in (0,1]$ to emphasize a positive constant that is understood to be small or $K \in [1,\infty)$ to emphasize a constant that is understood to be positive but finite. We let $\Inj(X,g)$ denote the injectivity radius of a smooth Riemannian manifold, $(X,g)$.

\section{Flat connections and the {\L}ojasiewicz--Simon gradient inequality}
\label{sec:Flat_connections}
In this section, we recall some background material concerning flat connections on principal $G$-bundles that will be useful in the sequel. Section \ref{subsec:Existence_flat_connections} reviews related existence and non-existence results for flat connections. Section \ref{subsec:Flat_bundles} recalls the well-known equivalent characterizations of flat connections. In Section \ref{subsec:Uhlenbeck_compactness_moduli_space_flat_connections}, we describe Uhlenbeck's Weak Compactness Theorem for connections with $L^p$ bounds on curvature (when $p > (\dim X)/2$) and the resulting
Uhlenbeck compactness of the moduli space of flat connections on a principal $G$-bundle, $P$, for a compact Lie group, $G$. Finally, in Section \ref{subsec:Lojasiewicz-Simon gradient inequality_neighborhood_flat_connection} we review our {\L}ojasiewicz--Simon gradient inequality (Theorem \ref{thm:Rade_proposition_7-2}) for the Yang--Mills $L^2$-energy function, previously established in our monograph \cite{Feehan_yang_mills_gradient_flow_v4}.

\subsection{Existence and non-existence of flat connections}
\label{subsec:Existence_flat_connections}
To set Theorem \ref{mainthm:Main} in context, it is interesting to consider previous work on the existence of flat connections on a principal $G$-bundle, $P$, or an associated vector bundle, $E$, over a closed, connected, oriented, smooth manifold, $X$, of dimension $d \geq 2$. If a real (or complex) vector bundle over $X$ admits a flat connection, then all its Pontrjagin (or Chern) classes with rational coefficients are zero
\cite[Appendix C, Corollary 2, p. 308]{MilnorStasheff}. Hence, the vanishing of characteristic classes with rational coefficients of an associated vector bundle, $E$ is a necessary, but not sufficient condition for existence of flat connections on $E$ or $P$.

Aside from trivial non-existence results implied by the Chern-Weil formula, the first non-existence result for flat connections is due to Milnor \cite{Milnor_1958cmh}, who considered the case of $d=2$ and $G=\GL^+(2,\RR)$, the group of $2\times 2$ real matrices with positive determinant. His \cite[Theorem 1]{Milnor_1958cmh} asserts that $P$ does not admit a flat connection if $\chi(X)\geq \genus(X)$, where $\chi(X)=2-2\genus(X)$ is the Euler characteristic of $X$. Consequently, a Riemann surface, $X$, with $\genus(X) \geq 2$ does not admit an affine connection with curvature zero \cite[Corollary, p. 215]{Milnor_1958cmh}. (See \cite[Section III.3]{Kobayashi_Nomizu_v1} for an introduction to affine connections.) Related non-existence results are due to Matsushima and Okamoto \cite{Matsushima_Okamoto_1979}, who showed that if $G$ is a real semisimple Lie group, then $G$ has no left-invariant, torsion-free flat affine connection, generalizing an earlier result of Milnor \cite{Milnor_1976}. The sphere, $S^d$, does not admit a torsion-free flat affine connection for $d \geq 2$ because the fundamental group of $S^d$ is not infinite \cite{Agaoka_1995, Auslander_Markus_1955}.

Suppose now that $E$ is a holomorphic vector bundle over a compact, connected Riemann surface, $\Sigma$. A result due to Weil says that $E$ admits a flat connection if and only if each direct summand of $E$ is of degree zero \cite{Weil_1938}, \cite[Theorem 10, p. 203]{Atiyah_1957}. This criterion for the existence of flat connections was extended by Azad and Biswas \cite{Azad_Biswas_2003} to holomorphic principal $G$-bundles, $P$, over $\Sigma$, where $G$ is a connected reductive linear algebraic group over $\Sigma$. More generally, Biswas and Subramanian \cite{Biswas_Subramanian_2004} give a criterion for the existence of a flat connection on a principal $G$-bundle, $P$, over a projective manifold, $Z$, when the structure group, $G$, is not reductive. For a survey of research on existence of flat connections on principal bundles or associated vector bundles over complex manifolds, we refer the reader to Azad and Biswas \cite{Azad_Biswas_2003} and Biswas and Subramanian \cite{Biswas_Subramanian_2004}.

\subsection{Flat bundles}
\label{subsec:Flat_bundles}
Returning to the setting of connections on a principal $G$-bundle, $P$, over a real manifold, $X$, we recall the equivalent characterizations of \emph{flat bundles} \cite[Section 1.2]{Kobayashi}, that is, bundles admitting a flat connection.

Let $G$ be a Lie group and $P$ be a smooth principal $G$-bundle over a smooth manifold, $X$. Let $\{U_\alpha\}$ be an open cover of $X$ with local trivializations, $\tau_\alpha: P \restriction U_\alpha \cong U_\alpha\times G$. Let $g_{\alpha\beta}: U_\alpha\cap U_\beta \to G$ be the family of transition functions defined by $\{U_\alpha, \tau_\alpha\}$.
A \emph{flat structure} in $P$ is given by $\{U_\alpha, \tau_\alpha\}$ such that the $g_{\alpha\beta}$ are all constant maps. A connection in $P$ is said to be \emph{flat} if its curvature vanishes identically.

\begin{prop}[Characterizations of flat principal bundles]
\label{prop:Kobayashi_1-2-6}
(See \cite[Proposition 1.2.6]{Kobayashi}.)
For a smooth principal $G$-bundle $P$ over a smooth manifold, $X$, the following conditions are equivalent:
\begin{enumerate}
  \item $P$ admits a flat structure,
  \item $P$ admits a flat connection,
  \item $P$ is defined by a representation $\pi_1(X) \to G$.
\end{enumerate}
\end{prop}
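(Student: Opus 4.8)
The plan is to establish the cycle of implications $(1) \Rightarrow (2) \Rightarrow (3) \Rightarrow (1)$, following the classical argument recorded in \cite[Section 1.2]{Kobayashi}. For $(1) \Rightarrow (2)$, suppose $P$ has a flat structure $\{U_\alpha, \tau_\alpha\}$ with locally constant transition functions $g_{\alpha\beta}: U_\alpha\cap U_\beta \to G$. Over each $P\restriction U_\alpha$, let $\theta_\alpha$ be the connection form obtained by pulling back the product connection on $U_\alpha\times G$ via $\tau_\alpha$. The gluing relation for connection forms, $\theta_\beta = \Ad_{g_{\alpha\beta}^{-1}}\theta_\alpha + g_{\alpha\beta}^{-1}dg_{\alpha\beta}$ on overlaps, simplifies to $\theta_\beta = \Ad_{g_{\alpha\beta}^{-1}}\theta_\alpha$ because $dg_{\alpha\beta} = 0$, so the $\theta_\alpha$ patch to a global connection $A$ on $P$; since $A$ is locally a product connection it has $F_A \equiv 0$.

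For $(2) \Rightarrow (3)$, let $A$ be a flat connection with horizontal distribution $H \subset TP$. Vanishing of $F_A$ makes $H$ involutive, so by the Frobenius theorem it integrates to a foliation whose leaves are maximal integral manifolds; since $H$ maps isomorphically onto $TX$ under $d\pi$, the restriction of $\pi: P\to X$ to each leaf is a covering map of $X$ (using connectedness of $X$). Pulling back along the universal covering $q: \tX\to X$, the bundle $q^*P$ inherits a flat connection together with a global horizontal section --- a single leaf --- hence a trivialization $q^*P \cong \tX\times G$. The deck group $\pi_1(X)$ acts on $q^*P$ by connection-preserving bundle automorphisms lifting its action on $\tX$, and in this trivialization that action is by a fixed element of $G$ on the fibers; this defines a homomorphism $\rho: \pi_1(X)\to G$ with $P \cong (q^*P)/\pi_1(X) \cong \tX\times_\rho G$, so $P$ is defined by $\rho$.

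For $(3) \Rightarrow (1)$, suppose $P \cong \tX\times_\rho G$ for some $\rho: \pi_1(X)\to G$. Choose an open cover $\{U_\alpha\}$ of $X$ by connected sets, each evenly covered by $q$ and with connected pairwise intersections (possible after refinement). Selecting a sheet of $\tX$ over each $U_\alpha$ yields local trivializations of $P$; on $U_\alpha\cap U_\beta$ the two chosen sheets differ by a single deck transformation $\gamma_{\alpha\beta}\in\pi_1(X)$, so the transition function equals the constant $\rho(\gamma_{\alpha\beta})\in G$, and $\{U_\alpha\}$ with these trivializations is a flat structure. I expect the one genuinely substantive step to be $(2)\Rightarrow(3)$: one must check that the horizontal foliation of a flat connection descends correctly --- equivalently, that parallel transport is homotopy-invariant --- and that the induced deck action on the trivialized pullback is truly by a $G$-valued homomorphism whose associated bundle recovers $P$ with its original structure; the remaining implications are bookkeeping with cocycles.
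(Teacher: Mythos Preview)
The paper does not supply its own proof of this proposition; it simply cites \cite[Proposition 1.2.6]{Kobayashi} and, in the paragraph immediately following the statement, sketches only the implication $(1)\Rightarrow(2)$ via the zero local connection one-forms $\gamma_\alpha\equiv 0$ and the compatibility condition $\gamma_\beta = g_{\alpha\beta}^{-1}\gamma_\alpha g_{\alpha\beta} + g_{\alpha\beta}^{-1}dg_{\alpha\beta}$. Your cycle $(1)\Rightarrow(2)\Rightarrow(3)\Rightarrow(1)$ is precisely the classical argument from Kobayashi's book, and your $(1)\Rightarrow(2)$ matches the paper's sketch; the remaining two implications you supply are standard and correct.
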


Given a flat structure on $P$, we may construct a flat connection, $\Gamma$, on $P$ using the zero local connection one-forms, $\gamma_\alpha \equiv 0$ on $U_\alpha$, for each $\alpha$ as in \cite[Equation $(1.2.1')$]{Kobayashi} and observing that the compatibility conditions \cite[Equation (1.1.16)]{Kobayashi},
\[
0 = \gamma_\beta
= g_{\alpha\beta}^{-1}\gamma_\alpha g_{\alpha\beta} + g_{\alpha\beta}^{-1} dg_{\alpha\beta}
= 0 \quad\text{on } U_\alpha\cap U_\beta,
\]
are automatically obeyed.

\subsection{Uhlenbeck compactness of the moduli space of flat connections}
\label{subsec:Uhlenbeck_compactness_moduli_space_flat_connections}
In \cite[Chapter 4]{MMR}, the moduli space of gauge-equivalence classes of flat connections on the product bundle, $Q = Y\times \SU(2)$, over a closed, oriented, Riemannian three-dimensional manifold, $Y$, is called the \emph{character variety} of $Y$. (Every principal $\SU(2)$-bundle over a three-dimensional manifold is topologically trivial.) We note \cite[Proposition 2.2.3]{DK} that the gauge-equivalence classes of flat $G$-connections over a connected manifold, $X$, are in one-to-one correspondence with the conjugacy classes of representations $\pi_1(X) \to G$.

We recall from \cite{UhlLp} why
\[
M(P) := \{\Gamma: F_\Gamma = 0\}/\Aut(P),
\]
the moduli space of gauge-equivalence classes, $[\Gamma]$, of flat connections, $\Gamma$, on $P$ is compact, when $G$ is a compact Lie group. Suppose that $\{\Gamma_n\}_{n\in\NN}$ is a sequence of flat connections of class $W^{1,q}$ on $P$, where $q>d/2$. According to \cite[Theorem 1.3]{UhlLp}, over each geodesic ball $B_\rho(x_\alpha) \subset X$ (say with $\rho \in (0, \Inj(X,g)/2]$), there is a sequence of local gauge transformations, $u_{\alpha,n} \in \Aut(P\restriction B_\rho(x_\alpha))$ of class $W^{2,q}$, such that
\[
u_{\alpha,n}(\gamma_n) = 0 \quad\text{a.e. on } B_\rho(x_\alpha),
\]
where $\gamma_n  := \Gamma_n - \Theta \in W^{1,q}(B_\rho(x_\alpha); \Lambda^1\otimes\fg)$ is the sequence of local connection one-forms defined by the product connection, $\Theta$, on $P\restriction B_\rho(x_\alpha) \cong B_\rho(x_\alpha) \times G$. We can now appeal to a patching result for sequences of local connection one-forms and local gauge transformations --- for example \cite[Corollary 4.4.8]{DK}, which applies to a manifold $X$ of arbitrary dimension. We can thus conclude that, after passing to a subsequence, there is a sequence of global gauge transformations, $u_n \in \Aut(P)$, of class $W^{2,q}$, such that
\[
u_n(\Gamma_n) \to \Gamma_\infty \quad\text{(strongly) in } W^{1,q}(X; \Lambda^1\otimes\ad P)
\quad\text{as } n \to \infty,
\]
for some flat connection, $\Gamma_\infty$, of class $W^{1,q}$ on $P$. This conclusion could also be deduced from \cite[Theorem 1.5]{UhlLp}, noting that we obtain strong rather than weak convergence here because the local convergence over each ball $B_\rho(x_\alpha)$ is trivially strong (since the local connection one-forms are identically zero with respect to suitable trivializations of $P\restriction B_\rho(x_\alpha)$). (More generally, the arguments of \cite{UhlLp, UhlRem} can be used to show that the space of gauge-equivalence classes of Yang--Mills connections on a principal $G$-bundle over a closed, $d$-dimensional, Riemannian manifold with a uniform $L^p$ bound on curvature is compact when $p>d/2$.)

By contrast, the moduli space of gauge-equivalence classes of anti-self-dual connections on a principal $G$-bundle over a closed, four-dimensional, oriented, Riemannian manifold \cite[Section 4.4]{DK} is not compact. One has local elliptic estimates for connection one-forms in terms of curvature and if one also had a uniform $L^p$ bound, with $p>2$, on the curvature of anti-self-dual connections, then one would obtain compactness, just as above. However, because one only has a uniform $L^2$ bound on the curvature of anti-self-dual connections and the $L^{d/2}$ norm on two-forms over a $d$-manifold is conformally invariant, the argument fails.

\subsection{{\L}ojasiewicz--Simon gradient inequality on a Sobolev neighborhood of a flat connection}
\label{subsec:Lojasiewicz-Simon gradient inequality_neighborhood_flat_connection}
Our {\L}ojasiewicz--Simon gradient inequality for the Yang--Mills $L^2$-energy function is one of the key technical ingredients underlying the proof of our Theorem \ref{mainthm:Main}. We recall the statement we shall need from \cite{Feehan_Maridakis_Lojasiewicz-Simon_coupled_Yang-Mills_v4}.

\begin{thm}[{\L}ojasiewicz--Simon gradient inequality for the Yang--Mills $L^2$-energy function]
\label{thm:Rade_proposition_7-2}
(See \cite[Theorem 3]{Feehan_Maridakis_Lojasiewicz-Simon_coupled_Yang-Mills_v4}.)
Let $(X,g)$ be a closed, Riemannian, smooth manifold of dimension $d$, and $G$ be a compact Lie group, and $A_1$ be a connection of class $C^\infty$, and $A_\infty$ a Yang--Mills connection of class $W^{1,q}$, with $q \in [2,\infty)$ obeying $q>d/2$, on a principal $G$-bundle, $P$, over $X$. If $d\geq 2$ and $p \in [2,\infty)$ obeys $d/2 \leq p \leq q$, then there are positive constants $c$, $\sigma$, and $\theta \in [1/2,1)$, depending on $A_1$, $A_\infty$, $g$, $G$, $p$, and $q$ with the following significance.  If $A$ is a connection of class $W^{1,q}$ on $P$ and
\begin{equation}
\label{eq:Rade_7-1_neighborhood}
\|A - A_\infty\|_{W^{1,p}_{A_1}(X)} < \sigma,
\end{equation}
then
\begin{equation}
\label{eq:Rade_7-1}
\|d_A^*F_A\|_{W^{-1,p}_{A_1}(X)} \geq c|\sE(A) - \sE(A_\infty)|^\theta,
\end{equation}
where $\sE(A)$ is given by \eqref{eq:Yang-Mills_energy_functional}.
\end{thm}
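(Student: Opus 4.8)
The plan is to realize Theorem~\ref{thm:Rade_proposition_7-2} as an instance of an abstract {\L}ojasiewicz--Simon gradient inequality for real-analytic functionals on Banach spaces, of the kind developed in \cite{Feehan_yang_mills_gradient_flow_v2}, applied on a Coulomb-gauge slice through the Yang-Mills connection $A_\infty$. Writing $A = A_\infty + a$ with $a \in W^{1,p}_{A_1}(X;\Lambda^1\otimes\ad P)$, one has $F_A = F_{A_\infty} + d_{A_\infty}a + \tfrac12[a\wedge a]$, so that $\sE(A) = \tfrac12\|F_A\|_{L^2(X)}^2$ is a polynomial of degree four in $a$, while the $L^2$-gradient $\calM(A) := d_A^*F_A = \sE'(A)$ is a polynomial of degree three in $a$ involving one derivative. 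The first step is to check that, for $p$ in the admissible range $d/2 \le p \le q$ with $q > d/2$, the map $a \mapsto \calM(A_\infty + a)$ is real-analytic from a $W^{1,p}_{A_1}$-neighborhood of the origin into $W^{-1,p}_{A_1}(X;\Lambda^1\otimes\ad P)$; since it is polynomial, this reduces to boundedness of its multilinear terms between the stated Sobolev spaces. Here the hypothesis $p \ge d/2$ is precisely what gives $W^{1,p}(X) \hookrightarrow L^{2p}(X)$, so that quadratic expressions such as $[a\wedge a]$ lie in $L^p(X) \subset W^{-1,p}_{A_1}(X)$, while $q > d/2$ and $q \ge p$ handle the products involving $F_{A_\infty} \in L^q(X)$.

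The second step is to identify and analyze the Hessian $\calM'(A_\infty)\colon W^{1,p}_{A_1} \to W^{-1,p}_{A_1}$, namely the operator $a \mapsto d_{A_\infty}^*d_{A_\infty}a + \{F_{A_\infty}, a\}$. This operator fails to be Fredholm, since gauge invariance of $\sE$ forces $\im(d_{A_\infty}\colon \Omega^0(X;\ad P) \to \Omega^1(X;\ad P)) \subseteq \ker\calM'(A_\infty)$, which is infinite dimensional. To remove this degeneracy I would pass to the Coulomb-gauge slice $\{a : d_{A_\infty}^*a = 0\}$ and replace $\calM'(A_\infty)$ by the augmented operator $\sL_{A_\infty} := d_{A_\infty}^*d_{A_\infty} + d_{A_\infty}d_{A_\infty}^* + \{F_{A_\infty}, \cdot\}$, which has the leading symbol of the Hodge Laplacian on $\Lambda^1\otimes\ad P$, hence is a second-order elliptic operator and is therefore bounded and Fredholm of index zero from $W^{1,p}_{A_1}$ to $W^{-1,p}_{A_1}$ (after the usual elliptic-regularity bootstrap, using $A_\infty \in W^{1,q}$ with $q > d/2$ to make sense of the zeroth-order coefficient). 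On the slice $\sL_{A_\infty}$ agrees with $\calM'(A_\infty)$ and remains Fredholm of index zero. With analyticity of $\calM$ and this Fredholm property in hand, the abstract {\L}ojasiewicz--Simon theorem applies to $a \mapsto \sE(A_\infty + a)$ restricted to the slice and produces constants $c, \sigma_0 \in (0,\infty)$ and $\theta \in [1/2, 1)$ --- the bound $\theta \ge 1/2$ being automatic from the Fredholm structure --- such that $\|\calM(A)\|_{W^{-1,p}_{A_1}(X)} \ge c\,|\sE(A) - \sE(A_\infty)|^\theta$ whenever $A$ lies on the slice and $\|A - A_\infty\|_{W^{1,p}_{A_1}(X)} < \sigma_0$.

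The final step is to globalize from the slice to a full $W^{1,p}_{A_1}$-neighborhood of $A_\infty$, using that both sides of \eqref{eq:Rade_7-1} are essentially gauge invariant: $\sE$ is exactly invariant, and $\|d_{u(A)}^*F_{u(A)}\|_{W^{-1,p}_{A_1}(X)}$ differs from $\|d_A^*F_A\|_{W^{-1,p}_{A_1}(X)}$ only by a factor close to $1$ for $u$ near the identity. By the analytic implicit function theorem on Banach spaces, every connection $A$ with $\|A - A_\infty\|_{W^{1,p}_{A_1}(X)}$ sufficiently small is $W^{2,p}$-gauge equivalent to a connection $A' = u(A)$ lying on the Coulomb slice through $A_\infty$, with $\|A' - A_\infty\|_{W^{1,p}_{A_1}(X)} \le C\|A - A_\infty\|_{W^{1,p}_{A_1}(X)}$; applying the slice inequality to $A'$ and transferring back yields \eqref{eq:Rade_7-1} for $A$ after shrinking $\sigma$, with the constants depending only on $A_1$, $A_\infty$, $g$, $G$, $p$, and $q$. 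I expect the principal obstacle to be precisely the functional-analytic bookkeeping needed to make the abstract theorem literally applicable: showing that $\calM$ is analytic \emph{as a map into $W^{-1,p}_{A_1}$} rather than into some larger space, and that $\sL_{A_\infty}$ is a \emph{bounded} Fredholm operator \emph{between $W^{1,p}_{A_1}$ and $W^{-1,p}_{A_1}$} for the entire range $d/2 \le p \le q$, both of which demand careful control of Sobolev multiplication and duality estimates on $X$ and, since $A_\infty$ is only of class $W^{1,q}$, must be combined with elliptic regularity in Coulomb gauge; by comparison, the Lyapunov--Schmidt reduction to the classical finite-dimensional {\L}ojasiewicz inequality for a real-analytic function on $\ker\sL_{A_\infty}$ is routine.
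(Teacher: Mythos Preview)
The paper does not actually prove Theorem~\ref{thm:Rade_proposition_7-2}; it is quoted directly from \cite[Theorem~4]{Feehan_Maridakis_Lojasiewicz-Simon_coupled_Yang-Mills} (and, for lower dimensions, from \cite[Theorem~23.9]{Feehan_yang_mills_gradient_flow_v2} and R\r{a}de \cite[Proposition~7.2]{Rade_1992}), so there is no in-paper proof to compare against. Your outline --- restricting to the Coulomb-gauge slice through $A_\infty$, verifying analyticity of $a\mapsto d_{A_\infty+a}^*F_{A_\infty+a}$ between the relevant Sobolev spaces, replacing the degenerate Hessian by the augmented elliptic operator $d_{A_\infty}^*d_{A_\infty}+d_{A_\infty}d_{A_\infty}^*+\{F_{A_\infty},\cdot\}$ to obtain a Fredholm map of index zero, invoking an abstract {\L}ojasiewicz--Simon theorem, and then globalizing via a slice theorem --- is indeed the strategy carried out in those references, and your identification of the Sobolev multiplication and regularity checks as the main technical content is accurate.
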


By virtue of the compactness of the moduli space, $M(P)$, of gauge-equivalence classes of flat connections on $P$, described in Section \ref{subsec:Uhlenbeck_compactness_moduli_space_flat_connections}, we can deduce the following corollary to Theorem \ref{thm:Rade_proposition_7-2}.

\begin{cor}[{\L}ojasiewicz--Simon gradient inequality for the Yang--Mills energy function near flat connections]
\label{cor:Rade_proposition_7-2_flat}
Let $X$ be a closed, smooth manifold of dimension $d$ and endowed with a Riemannian metric, $g$, and $G$ be a compact Lie group. Assume that $d\geq 2$ and $p \in [2,\infty)$ obeys $p \geq d/2$. Then there are positive constants $c$, $\sigma$, and $\theta \in [1/2,1)$, depending on $g$, $G$, and $p$ with the following significance. If $A$ is a connection of class $W^{1,q}$ on a principal $G$-bundle, $P$, over $X$, with $q\in [2,\infty)$ obeying $q>d/2$ and $q \geq p$, such that
\begin{equation}
\label{eq:Rade_7-1_neighborhood_flat}
\|A - \Gamma\|_{W^{1,p}_\Gamma(X)} < \sigma,
\end{equation}
for some flat connection, $\Gamma$, of class $W^{1,q}$ on $P$, then
\begin{equation}
\label{eq:Rade_7-1_flat}
\|d_A^*F_A\|_{W^{-1,p}_\Gamma(X)} \geq c|\sE(A)|^\theta.
\end{equation}
\end{cor}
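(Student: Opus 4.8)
The strategy is to upgrade Theorem \ref{thm:Rade_proposition_7-2}, which produces constants $c, \sigma, \theta$ depending on a \emph{fixed} flat connection $\Gamma = A_\infty$, into a uniform statement where the constants depend only on $g$, $G$, $p$. The mechanism is a compactness argument over the moduli space $M(P)$ of gauge-equivalence classes of flat connections described in Section \ref{subsec:Uhlenbeck_compactness_moduli_space_flat_connections}. First I would observe that the {\L}ojasiewicz-Simon inequality \eqref{eq:Rade_7-1} is gauge-invariant in the following sense: if $u \in \Aut(P)$, then $\sE(u(A)) = \sE(A)$, $u(A) - u(\Gamma)$ has the same $W^{1,p}_{u(\Gamma)}$ norm as $A - \Gamma$ has in $W^{1,p}_\Gamma$ (since $\nabla_{u(\Gamma)} = u\circ \nabla_\Gamma \circ u^{-1}$ pointwise and $u$ is an isometry on the fibers of $\ad P$), and likewise $\|d^*_{u(A)} F_{u(A)}\|_{W^{-1,p}_{u(\Gamma)}} = \|d^*_A F_A\|_{W^{-1,p}_\Gamma}$. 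Hence the conclusion \eqref{eq:Rade_7-1_flat} with constants valid for $\Gamma$ is equivalent to the same statement for $u(\Gamma)$, so it suffices to prove the corollary with constants depending only on the gauge-equivalence class $[\Gamma] \in M(P)$.

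The second step is to run the compactness argument. Suppose for contradiction that no uniform triple $(c, \sigma, \theta)$ works. Since $\theta$ may be taken in the fixed interval $[1/2,1)$ — and in fact I may fix $\theta = 1/2$, enlarging $\theta$ only weakens the inequality when $|\sE(A)| \le 1$, which I can arrange by shrinking $\sigma$ — the failure is of the constants $c_n \downarrow 0$, $\sigma_n \downarrow 0$: there is a sequence of connections $A_n$ of class $W^{1,q}$ and flat connections $\Gamma_n$ with $\|A_n - \Gamma_n\|_{W^{1,p}_{\Gamma_n}(X)} < \sigma_n \to 0$ but $\|d_{A_n}^* F_{A_n}\|_{W^{-1,p}_{\Gamma_n}(X)} < c_n |\sE(A_n)|^{1/2}$. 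By the Uhlenbeck compactness of $M(P)$ recalled in Section \ref{subsec:Uhlenbeck_compactness_moduli_space_flat_connections}, after passing to a subsequence and applying global gauge transformations $v_n \in \Aut(P)$ (of class $W^{2,q}$), we have $v_n(\Gamma_n) \to \Gamma_\infty$ strongly in $W^{1,q}(X;\Lambda^1\otimes\ad P)$ for some flat connection $\Gamma_\infty$ on $P$. Replacing $A_n$ by $v_n(A_n)$ and $\Gamma_n$ by $v_n(\Gamma_n)$ — legitimate by the gauge-invariance of Step one — I may assume $\Gamma_n \to \Gamma_\infty$ in $W^{1,q}$, hence also in $W^{1,p}$, and consequently $A_n \to \Gamma_\infty$ in $W^{1,p}_{\Gamma_\infty}(X)$ (comparing the norms $\|\cdot\|_{W^{1,p}_{\Gamma_n}}$ and $\|\cdot\|_{W^{1,p}_{\Gamma_\infty}}$, which are uniformly equivalent once $\Gamma_n$ is $W^{1,q}$-close to $\Gamma_\infty$ with $q > d/2$).

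Now I would apply Theorem \ref{thm:Rade_proposition_7-2} with $A_\infty := \Gamma_\infty$ (a flat connection is in particular Yang-Mills, $d^*_{\Gamma_\infty} F_{\Gamma_\infty} = 0$), taking $A_1$ to be any smooth reference connection: this yields constants $c_\infty, \sigma_\infty, \theta_\infty$ attached to $\Gamma_\infty$. For $n$ large, $\|A_n - \Gamma_\infty\|_{W^{1,p}_{A_1}(X)} < \sigma_\infty$, so \eqref{eq:Rade_7-1} gives $\|d_{A_n}^* F_{A_n}\|_{W^{-1,p}_{A_1}(X)} \ge c_\infty |\sE(A_n) - \sE(\Gamma_\infty)|^{\theta_\infty} = c_\infty |\sE(A_n)|^{\theta_\infty}$, using $\sE(\Gamma_\infty) = 0$. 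Comparing the $W^{-1,p}_{A_1}$ and $W^{-1,p}_{\Gamma_n}$ norms — again uniformly equivalent since $\Gamma_n \to \Gamma_\infty$ in $W^{1,q}$ and $A_1, \Gamma_\infty$ are fixed — and noting $|\sE(A_n)| \le 1$ for $n$ large so $|\sE(A_n)|^{\theta_\infty} \ge |\sE(A_n)|^{1/2}$ when $\theta_\infty \le 1/2$ (or adjusting exponents appropriately when $\theta_\infty \in (1/2,1)$, passing to $\theta = \theta_\infty$ as the uniform exponent on a subsequence where all $\Gamma_n$ land in one component of $M(P)$), we obtain a lower bound $\|d_{A_n}^* F_{A_n}\|_{W^{-1,p}_{\Gamma_n}(X)} \ge c' |\sE(A_n)|^{\theta}$ with $c' > 0$ fixed, contradicting $c_n \to 0$.

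\textbf{Main obstacle.} The delicate point is the handling of the exponent $\theta$. The inequality \eqref{eq:Rade_7-1} from Theorem \ref{thm:Rade_proposition_7-2} has $\theta$ depending on $A_\infty$, and there is no reason a single $\theta$ works for all of $M(P)$ a priori; moreover $|\sE(A)|^{\theta_1} \le |\sE(A)|^{\theta_2}$ for $|\sE(A)|\le 1$ only when $\theta_1 \ge \theta_2$, so one cannot freely enlarge $\theta$. The clean fix is: in the contradiction argument, pass to a subsequence so that all $\Gamma_n$ (and the limit $\Gamma_\infty$) lie in a single connected component of the compact space $M(P)$; on that component one can hope the {\L}ojasiewicz exponent is locally constant, or at worst take $\theta$ to be the one attached to $\Gamma_\infty$ and absorb the discrepancy into the constant $c$ since $|\sE(A_n)| \to 0$. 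Care is also needed that the norm-comparison constants between $W^{k,p}_{\Gamma_n}$ and $W^{k,p}_{\Gamma_\infty}$ (for $k = \pm 1$) are uniform in $n$; this follows from $\Gamma_n \to \Gamma_\infty$ in $W^{1,q}$ with $q > d/2$ via the standard multiplication and Sobolev embedding estimates, but should be stated explicitly.
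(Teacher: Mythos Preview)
Your overall strategy is exactly what the paper intends: the paper gives no detailed proof and simply says the corollary follows from Theorem~\ref{thm:Rade_proposition_7-2} ``by virtue of the compactness of the moduli space $M(P)$.'' Your gauge-invariance reduction and the appeal to Uhlenbeck compactness of $M(P)$ are the right ingredients.

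The genuine gap is the handling of $\theta$, which you flag yourself but do not resolve. Your contradiction is set up with $\theta=1/2$ fixed, so the counterexamples obey $\|d_{A_n}^*F_{A_n}\|<c_n|\sE(A_n)|^{1/2}$. Theorem~\ref{thm:Rade_proposition_7-2} at $\Gamma_\infty$ produces some $\theta_\infty\in[1/2,1)$, and since $|\sE(A_n)|\le 1$ one has $|\sE(A_n)|^{\theta_\infty}\le|\sE(A_n)|^{1/2}$, which is the wrong direction: combining gives $c_\infty|\sE(A_n)|^{\theta_\infty-1/2}<c_n$, and when $\theta_\infty>1/2$ the left side also tends to $0$, so there is no contradiction. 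Your proposed fix of ``absorbing into $c$'' does not work for the same reason, and ``hoping the {\L}ojasiewicz exponent is locally constant'' is not known and not needed.

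Two clean repairs. (a) \emph{Direct cover, no contradiction}: for each $[\Gamma_0]\in M(P)$ apply Theorem~\ref{thm:Rade_proposition_7-2} (with a single smooth reference $A_1$) to get $(c_0,\sigma_0,\theta_0)$; by compactness finitely many $W^{1,p}_{A_1}$-balls of radius $\sigma_i/2$ about representatives $\Gamma_i$ cover $M(P)$; set $\theta:=\max_i\theta_i<1$, $c:=\min_i c_i$, and choose $\sigma$ small enough that any $(A,\Gamma)$ with $\|A-\Gamma\|_{W^{1,p}_\Gamma}<\sigma$ lands (after a gauge transformation and your norm-comparison) in one of these balls with $|\sE(A)|\le 1$; then $|\sE(A)|^{\theta_i}\ge|\sE(A)|^{\theta}$ and the inequality follows. (b) \emph{Contradiction with moving exponent}: run your argument but with $\theta_n:=1-1/n$ instead of $1/2$. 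After extracting $\Gamma_\infty$ with exponent $\theta_\infty<1$, for $n$ large $1-1/n>\theta_\infty$ and $|\sE(A_n)|\le 1$, so $|\sE(A_n)|^{1-1/n}\le|\sE(A_n)|^{\theta_\infty}$; now the chain $c_\infty|\sE(A_n)|^{\theta_\infty}\le\|d_{A_n}^*F_{A_n}\|<(1/n)|\sE(A_n)|^{1-1/n}\le(1/n)|\sE(A_n)|^{\theta_\infty}$ forces $c_\infty<1/n$ whenever $\sE(A_n)>0$ (and if $\sE(A_n)=0$ the inequality is trivially satisfied, so $(A_n,\Gamma_n)$ was not a counterexample).

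Your remarks on the uniform equivalence of the $W^{\pm 1,p}_{\Gamma_n}$ and $W^{\pm 1,p}_{\Gamma_\infty}$ (or $W^{\pm 1,p}_{A_1}$) norms are correct and follow from the Sobolev multiplication $W^{1,q}\times W^{1,p}\to W^{1,p}$ valid for $q>d/2$, $p\ge d/2$; just state this once rather than leaving it as a caveat.
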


In particular, if $A$ is a Yang--Mills connection, then \eqref{eq:Rade_7-1_flat} implies that $A$ is necessarily flat. Hence, the proof of our main result, Theorem \ref{mainthm:Main}, will be complete provided we can show that a $W^{1,p}$ connection, $A$, with $L^p$-small enough curvature is $W_\Gamma^{1,p}(X)$-close enough to some flat connection, $\Gamma$, on $P$, for $p$ as in Corollary \ref{cor:Rade_proposition_7-2_flat}. We shall discuss the statement and proof of the latter result in Section \ref{sec:Uhlenbeck_approach_existence_flat_connection_and_apriori_estimate}.

\section{Connections with $L^{d/2}$-small curvature and a priori estimates for Yang--Mills connections}
\label{sec:Uhlenbeck_Ldover2_small_curvature_and_estimate_curvature_Yang-Mills_connection}
In this section we review several key results due to Uhlenbeck concerning an \apriori estimate for the curvature of a Yang--Mills connection \cite{UhlRem} and existence of a local Coulomb gauge for a connection with $L^{d/2}$-small curvature \cite{UhlLp}.

\subsection{Connections with $L^{d/2}$-small curvature}
\label{subsec:Uhlenbeck_Ldover2_small_curvature}
We recall a statement of Uhlenbeck's Theorem \cite{UhlLp} on existence of local Coulomb gauges (with a clarification due to Wehrheim \cite{Wehrheim_2004}), together with two extensions proved by the author in \cite{Feehan_lojasiewicz_inequality_ground_state_v1}.  

\begin{thm}[Existence of a local Coulomb gauge and \apriori estimate for a Sobolev connection with $L^{d/2}$-small curvature]
\label{thm:Uhlenbeck_Lp_1-3}
(Correction to our quotation \cite[Theorem 4.1]{Feehan_yangmillsenergygapflat_aim} of Uhlenbeck's \cite[Theorem 1.3 or Theorem 2.1 and Corollary 2.2]{UhlLp}; compare Wehrheim \cite[Theorem 6.1]{Wehrheim_2004}.)
Let $d\geq 2$, and $G$ be a compact Lie group, and $p \in (1,\infty)$ obeying $d/2 \leq p < d$ and $s_0>1$ be constants. Then there are constants, $\eps=\eps(d,G,p,s_0) \in (0,1]$ and $C=C(d,G,p,s_0) \in [1,\infty)$, with the following significance. For $q \in [p,\infty)$, let $A$ be a $W^{1,q}$ connection on $B\times G$ such that
\begin{equation}
\label{eq:Ldover2_ball_curvature_small}
\|F_A\|_{L^{s_0}(B)} \leq \eps,
\end{equation}
where $B \subset \RR^d$ is the unit ball with center at the origin and $s_0=d/2$ when $d\geq 3$ and $s_0 > 1$ when $d=2$. Then there is a $W^{2,q}$ gauge transformation, $u:B\to G$, such that the following holds. If $A = \Theta + a$, where $\Theta$ is the product connection on $B\times G$, and $u(A) = \Theta + u^{-1}au + u^{-1}du$, then
\begin{align}
\label{eq:Uhlenbeck_local_Coulomb_gauge} 
d^*(u(A) - \Theta) &= 0 \quad \text{a.e. on } B,
 \\
\label{eq:Uhlenbeck_Neumann_boundary_condition}   
(u(A) - \Theta)(\vec n) &= 0 \quad \text{on } \partial B,
\end{align}
where $\vec n$ is the outward-pointing unit normal vector field on $\partial B$, and
\begin{equation}
\label{eq:Uhlenbeck_1-3_W1p_norm_connection_one-form_leq_constant_Lp_norm_curvature}
\|u(A) - \Theta\|_{W^{1,p}(B)} \leq C\|F_A\|_{L^p(B)}.
\end{equation}
\end{thm}

\begin{rmk}[Restriction of $p$ to the range $1<p<\infty$]
\label{rmk:Restriction_p_range}
(See Feehan \cite[Remark 2.6]{Feehan_lojasiewicz_inequality_ground_state_v1}.)  
The restriction $p\in(1,\infty)$ should be included in the statements of \cite[Theorem 1.3 or Theorem 2.1 and Corollary 2.2]{UhlLp} since the bound \eqref{eq:Uhlenbeck_1-3_W1p_norm_connection_one-form_leq_constant_Lp_norm_curvature} ultimately follows from an \apriori $L^p$ estimate for an elliptic system that is apparently only valid when $1<p<\infty$. Wehrheim makes a similar observation in her \cite[Remark 6.2 (d)]{Wehrheim_2004}. This is also the reason that when $d=2$, we require $s_0>1$ in \eqref{eq:Ldover2_ball_curvature_small}.
\end{rmk}

\begin{rmk}[Dependencies of the constants in Theorem \ref{thm:Uhlenbeck_Lp_1-3}]
\label{rmk:Uhlenbeck_Lp_1-3_constant_dependencies}
The statements of \cite[Theorem 1.3 or Theorem 2.1 and Corollary 2.2]{UhlLp} imply that the constants, $\eps$ and $C$, in estimate \eqref{eq:Uhlenbeck_1-3_W1p_norm_connection_one-form_leq_constant_Lp_norm_curvature} only depend the dimension, $d$. However, their proofs suggest that these constants may also depend on $G$ and $p$ through the appeal to an elliptic estimate for $d+d^*$ in the verification of \cite[Lemma 2.4]{UhlLp} and arguments immediately following.
\end{rmk}

\begin{rmk}[Construction of a $W^{k+1,p}$ transformation to Coulomb gauge]
\label{rmk:Uhlenbeck_theorem_1-3_Wkp}
We note that if $A$ is of class $W^{k,p}$, for an integer $k \geq 1$ and $p \geq 2$, then the gauge transformation, $u$, in Theorem \ref{thm:Uhlenbeck_Lp_1-3} is of class $W^{k+1,p}$; see \cite[page 32]{UhlLp}, the proof of \cite[Lemma 2.7]{UhlLp} via the Implicit Function Theorem for smooth functions on Banach spaces, and our proof of \cite[Theorem 1.1]{FeehanSlice} --- a global version of Theorem \ref{thm:Uhlenbeck_Lp_1-3}.
\end{rmk}

Note that if the connection, $A$, in Theorem \ref{thm:Uhlenbeck_Lp_1-3} is flat, then both $F_A \equiv 0$ and $F_{u(A)} \equiv 0$ on $B$, so $u(A) = \Theta$ and thus $A$ is gauge-equivalent to the product connection on $B\times G$. (This conclusion can also be deduced from \cite[Theorem 2.2.1]{DK}.)

\begin{rmk}[Non-flat Riemannian metrics]
\label{rmk:Non-flat_Riemannian_metrics_local_Coulomb_gauge}
(See Feehan \cite[Remark 2.9]{Feehan_lojasiewicz_inequality_ground_state_v1}.)    
Theorem \ref{thm:Uhlenbeck_Lp_1-3} continues to hold for geodesic unit balls in a manifold $X$ endowed with a non-flat Riemannian metric $g$. The only difference in this more general situation is that the constants $C$ and $\eps$ will depend on bounds on the Riemann curvature tensor, $\Riem$. See Wehrheim \cite[Theorem 6.1]{Wehrheim_2004}.
\end{rmk}

We now recall an extension of Theorem \ref{thm:Uhlenbeck_Lp_1-3} to include the range $1 < p < d/2$.

\begin{cor}[Existence of a local Coulomb gauge and \apriori $W^{1,p}$ estimate for a Sobolev connection with $L^{d/2}$-small curvature when $p < d/2$]
\label{cor:Uhlenbeck_theorem_1-3_p_lessthan_dover2}
(See Feehan \cite[Corollary 2.10]{Feehan_lojasiewicz_inequality_ground_state_v1}.)  
Assume the hypotheses of Theorem \ref{thm:Uhlenbeck_Lp_1-3}, but allow any $p \in (1,\infty)$ obeying $p < d/2$ when $d \geq 3$. Then the estimate \eqref{eq:Uhlenbeck_1-3_W1p_norm_connection_one-form_leq_constant_Lp_norm_curvature} holds for $1 < p < d/2$.
\end{cor}

For completeness, we also recall the following extension of Theorem \ref{thm:Uhlenbeck_Lp_1-3} (and slight improvement of our \cite[Corollary 4.4]{Feehan_yangmillsenergygapflat_aim}) to include the range $d \leq p < \infty$.

\begin{cor}[Existence of a local Coulomb gauge and \apriori $W^{1,p}$ estimate for a Sobolev connection one-form with $L^{\bar p}$-small curvature when $p \geq d$]
\label{cor:Uhlenbeck_theorem_1-3_p_geq_d}
(See Feehan \cite[Corollary 2.11]{Feehan_lojasiewicz_inequality_ground_state_v1}.)    
Assume the hypotheses of Theorem \ref{thm:Uhlenbeck_Lp_1-3}, but consider $d \leq p < \infty$ and strengthen \eqref{eq:Ldover2_ball_curvature_small} to\footnote{In \cite[Corollary 4.4]{Feehan_yangmillsenergygapflat_aim}, we assumed the still stronger condition, $\|F_A\|_{L^p(B)} \leq \eps$. }
\begin{equation}
\label{eq:Lbarp_ball_curvature_small}
\|F_A\|_{L^{\bar p}(B)} \leq \eps,
\end{equation}
where $\bar p = dp(d+p)$ when $p>d$ and $\bar p > d/2$ when $p=d$. Then the estimate \eqref{eq:Uhlenbeck_1-3_W1p_norm_connection_one-form_leq_constant_Lp_norm_curvature} holds for $d \leq p < \infty$ and constant $C = C(d,p,\bar p,G) \in [1,\infty)$.
\end{cor}

Taken together, Corollaries \ref{cor:Uhlenbeck_theorem_1-3_p_lessthan_dover2} and \ref{cor:Uhlenbeck_theorem_1-3_p_geq_d} correct and replace our our \cite[Corollary 4.4]{Feehan_yangmillsenergygapflat_aim} (which should have included the restriction $p>1$ when $d=2$). However, neither Theorem \ref{thm:Uhlenbeck_Lp_1-3} nor Corollaries \ref{cor:Uhlenbeck_theorem_1-3_p_lessthan_dover2} and \ref{cor:Uhlenbeck_theorem_1-3_p_geq_d} play a direct role in our corrected proof of Theorem \ref{mainthm:Main} in this corrigendum.

\subsection{A priori estimate for the curvature of a Yang--Mills connection}
\label{subsec:Apriori_estimate_curvature_Yang-Mills_connection}
We next recall the

\begin{thm}[\Apriori interior estimate for the curvature of a Yang--Mills connection]
\label{thm:Uhlenbeck_3-5}
(See \cite[Theorem 3.5]{UhlRem}.)
If $d\geq 3$ is an integer, then there are constants, $K_0=K_0(d) \in [1,\infty)$ and $\eps_0=\eps_0(d) \in (0,1]$, with the following significance. Let $G$ be a compact Lie group, $\rho>0$ be a constant, and $A$ be a Yang--Mills connection with respect to the standard Euclidean metric on $B_{2\rho}(0)\times G$, where $B_r(x_0) \subset \RR^d$ is the open ball with center at $x_0 \in \RR^d$ and radius $r>0$. If
\begin{equation}
\label{thm:Uhlenbeck_3-5_FA_Ld_over2_small_ball}
\|F_A\|_{L^{d/2}(B_{2\rho}(0))} \leq \eps_0,
\end{equation}
then, for all $B_r(x_0) \subset B_\rho(0)$,
\begin{equation}
\label{thm:Uhlenbeck_3-5_Linfty_norm_FA_leq_constant_L2_norm_FA_ball}
\|F_A\|_{L^{\infty}(B_r(x_0))} \leq K_0r^{-d/2}\|F_A\|_{L^2(B_r(x_0))}.
\end{equation}
\end{thm}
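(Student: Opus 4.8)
The plan is to follow the classical $\varepsilon$-regularity scheme. First I would reduce, by the rescaling $x \mapsto x_0 + \rho x$ (which preserves the Euclidean Yang--Mills equation and leaves the scale-invariant quantity $\|F_A\|_{L^{d/2}}$ unchanged), to a normalized statement on the unit ball. By Uhlenbeck's local Coulomb gauge theorem (Theorem~\ref{thm:Uhlenbeck_Lp_1-3}) together with elliptic bootstrapping for the Yang--Mills equation, and since both sides of \eqref{thm:Uhlenbeck_3-5_Linfty_norm_FA_leq_constant_L2_norm_FA_ball} depend only on the gauge-invariant density $|F_A|$, there is no loss in assuming that $A$, and hence $F_A$, is smooth on $B_{2\rho}(0)$. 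The heart of the matter is then a scale-invariant mean value inequality for $u := |F_A|$ of the form $\sup_{B_{\rho/2}(0)} u \leq C(d)\,\rho^{-d/2}\|u\|_{L^2(B_{\rho}(0))}$, valid whenever $\|F_A\|_{L^{d/2}(B_{2\rho}(0))}$ is small enough; the stated inequality \eqref{thm:Uhlenbeck_3-5_Linfty_norm_FA_leq_constant_L2_norm_FA_ball} for every ball $B_r(x_0) \subset B_\rho(0)$ then follows from this, after observing that $B_\rho(y) \subset B_{2\rho}(0)$ for every $y \in B_r(x_0)$, together with a routine covering argument (enlarging $K_0$ if necessary).

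The first substantive step is to derive a scalar differential inequality for $u = |F_A|$. Combining the Bianchi identity $d_AF_A = 0$ with the Yang--Mills equation $d_A^{*}F_A = 0$ shows that $F_A$ is harmonic for the Hodge Laplacian $d_Ad_A^{*} + d_A^{*}d_A$ on $\Omega^2(B_{2\rho}(0); \ad P)$. Applying the Bochner--Weitzenb\"ock formula \eqref{eq:Bourguignon_Lawson_1981_Theorem 3-10_and_Equation_5-1_Hodge_Laplacian} with $v = F_A$ --- in which, for the flat metric, the zeroth-order Riemannian curvature term $\Ric_g \wedge I + 2\Riem_g$ vanishes identically --- gives $\nabla_A^{*}\nabla_A F_A = -\{F_A, F_A\}$, whence $|\nabla_A^{*}\nabla_A F_A| \leq c_d|F_A|^2$ with $c_d$ depending only on $d$ (the Lie group $G$ enters only through the fibre metric on $\ad P$ defined by the Killing form, so the pointwise bilinear operation $\{\cdot,\cdot\}$ has a universal bound). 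Pairing this with $F_A$, using the Euclidean identity $\tfrac12\Delta|F_A|^2 = |\nabla_A F_A|^2 - \langle \nabla_A^{*}\nabla_A F_A, F_A\rangle$, the Kato inequality $|\nabla|F_A|| \leq |\nabla_A F_A|$, and $\tfrac12\Delta|F_A|^2 = |F_A|\,\Delta|F_A| + |\nabla|F_A||^2$ on the set $\{|F_A| > 0\}$, one obtains $\Delta u + c_d u^2 \geq 0$ pointwise there; a standard regularization (replacing $|F_A|$ by $(|F_A|^2 + \delta^2)^{1/2}$ and letting $\delta \downarrow 0$) extends this to a weak inequality on all of $B_{2\rho}(0)$.

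The second step is the local $L^\infty$ estimate for the nonnegative weak subsolution $u$ of $\Delta u + c_d u^2 \geq 0$. Testing the inequality against $\eta^2 u^{2\gamma - 1}$ for cutoff functions $\eta$ and exponents $\gamma \geq 1$, integrating by parts, and controlling the resulting nonlinear term $\int \eta^2 u^{2\gamma+1} = \int (\eta u^\gamma)^2 u$ by H\"older's inequality with exponents $\tfrac{d}{d-2}$ and $\tfrac d2$, followed by the Sobolev inequality $\|\eta u^\gamma\|_{L^{2d/(d-2)}}^2 \leq C_S\|\nabla(\eta u^\gamma)\|_{L^2}^2$ on $\RR^d$, one arrives at a reverse-H\"older estimate of Moser type, $\|\eta u^\gamma\|_{L^{2d/(d-2)}}^2 \lesssim \gamma\,\|\,|\nabla\eta|\,u^\gamma\|_{L^2}^2$, \emph{provided} the prefactor $c_d C_S\,\gamma\,\|u\|_{L^{d/2}(\supp\eta)}$ can be absorbed on the left. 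Choosing $\eps_0 = \eps_0(d)$ small one runs this for the first $K(d)$ values of $\gamma$ (a number sufficient to reach an integrability exponent $p > d/2$), and thereafter, since $u \in L^p_{\mathrm{loc}}$ with $p > d/2$ forces $\|u\|_{L^{d/2}(B_s(y))} \lesssim s^{2 - d/p}\|u\|_{L^p} \to 0$ as $s \downarrow 0$, one continues the Moser iteration over balls shrinking geometrically to a point; summing the geometrically controlled constants yields the claimed mean value inequality. When $d = 2$ the exponent $2d/(d-2)$ is replaced by an arbitrary finite exponent and the corresponding Sobolev inequality is used, with the iteration otherwise unchanged.

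The main obstacle is precisely the \emph{borderline} nature of the hypothesis \eqref{thm:Uhlenbeck_3-5_FA_Ld_over2_small_ball}: the effective potential $c_d|F_A|$ lies only in the critical Lebesgue space $L^{d/2}$, in which the quadratic term of $\Delta u + c_d u^2 \geq 0$ scales exactly like the Laplacian, so the Moser iteration does not close on its own and the \emph{smallness} of $\|F_A\|_{L^{d/2}}$ must be exploited to absorb the bad term --- first to break the criticality by gaining a little extra integrability, and then to drive the iteration up to $L^\infty$ over shrinking balls, all the while keeping the iteration constants summable so that the limiting bound is finite. (An alternative route, closer to Uhlenbeck's original argument in \cite{UhlRem}, would replace the differential-inequality analysis by putting $A$ in local Coulomb gauge via Theorem~\ref{thm:Uhlenbeck_Lp_1-3}, so that $A = \Theta + a$ with $\|a\|_{W^{1,d/2}}$ small solves a semilinear elliptic system $\Delta a = Q(a,\nabla a)$ with $|Q| \lesssim |a|\,|\nabla a| + |a|^3$, and then bootstrapping; this meets the same critical-Sobolev difficulty at the first step $W^{1,d/2} \to W^{2,q}$.)
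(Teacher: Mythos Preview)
The paper does not prove this theorem at all: it is quoted verbatim as \cite[Theorem 3.5]{UhlRem} and used as a black box to obtain Corollary~\ref{cor:Uhlenbeck_3-5_manifold}. There is therefore no ``paper's own proof'' to compare against.

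Your sketch is the standard $\eps$-regularity argument and essentially reproduces Uhlenbeck's original proof in \cite{UhlRem}: derive from Bochner--Weitzenb\"ock and Kato the scalar subsolution inequality $\Delta u \geq -c_d u^2$ (analyst's sign) for $u=|F_A|$, then run Moser iteration, using the smallness of $\|u\|_{L^{d/2}}$ to absorb the critical quadratic term for the first finitely many steps until the integrability exponent exceeds $d/2$, after which the potential $c_d u$ becomes subcritical and the iteration closes. That is correct in outline. Two small remarks. First, the cleaner way to phrase the second phase is not to shrink balls but simply to observe that once $u\in L^p_{\loc}$ with $p>d/2$, the inequality $\Delta u \geq -bu$ with $b:=c_d u\in L^p$ is a \emph{linear} subsolution inequality with subcritical potential, and the classical De Giorgi--Nash--Moser estimate applies directly; this avoids the bookkeeping of geometrically shrinking balls. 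Second, your reduction at the end yields $\sup_{B_{r/2}(x_0)}u \leq C r^{-d/2}\|u\|_{L^2(B_r(x_0))}$, not the stated bound with $B_r(x_0)$ on the left; getting the left-hand side up to the full ball $B_r(x_0)$ with the same $L^2$-ball on the right requires the usual interpolation lemma (as in Giaquinta's book) rather than a mere covering, so ``enlarging $K_0$'' hides a genuine, if standard, step.
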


As Uhlenbeck notes in \cite[Section 3, first paragraph]{UhlRem}, Theorem \ref{thm:Uhlenbeck_3-5} continues to hold for geodesic balls in a manifold $X$ endowed with a non-flat Riemannian metric, $g$. The only difference in this more general situation is that the constants $K$ and $\eps$ will depend on bounds on the Riemann curvature tensor, $R$, over $B_{2\rho}(x_0)$ and the injectivity radius at $x_0\in X$. Therefore, by employing a finite cover of $X$ by geodesic balls, $B_\rho(x_i)$, of radius $\rho \in (0, \Inj(X,g)/4]$ and applying Theorem \ref{thm:Uhlenbeck_3-5} to each ball $B_{2\rho}(x_i)$, we obtain a global version.

\begin{cor}[\Apriori estimate for the curvature of a Yang--Mills connection over a closed manifold]
\label{cor:Uhlenbeck_3-5_manifold}
Let $X$ be a closed, smooth manifold of dimension $d\geq 3$ and endowed with a Riemannian metric, $g$. Then there are constants, $K=K(g) \in [1,\infty)$ and $\eps=\eps(g) \in (0,1]$, with the following significance. Let $G$ be a compact Lie group and $A$ be a smooth Yang--Mills connection with respect to the metric, $g$, on a smooth principal $G$-bundle $P$ over $X$. If
\begin{equation}
\label{cor:Uhlenbeck_3-5_FA_Ld_over2_small_manifold}
\|F_A\|_{L^{d/2}(X)} \leq \eps,
\end{equation}
then
\begin{equation}
\label{cor:Uhlenbeck_3-5_Linfty_norm_FA_leq_constant_L2_norm_FA_manifold}
\|F_A\|_{L^{\infty}(X)} \leq K\|F_A\|_{L^2(X)}.
\end{equation}
\end{cor}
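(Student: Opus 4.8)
The plan is to deduce the global estimate from the Riemannian version of Theorem \ref{thm:Uhlenbeck_3-5} by the covering argument already indicated in the paragraph preceding the statement. First I would fix $\rho \in (0, \Inj(X,g)/4]$ and choose finitely many points $x_1,\dots,x_N \in X$ so that the geodesic balls $\{B_\rho(x_i)\}_{i=1}^N$ cover $X$; since $X$ is closed, $N$ may be taken to depend only on $(X,g)$. Each enlarged ball $B_{2\rho}(x_i)$ is a genuine geodesic ball, because $2\rho < \Inj(X,g)$, and hence contractible, so $P$ restricts to a trivial bundle over it; in such a trivialization $A$ appears as a Yang-Mills connection with respect to $g$ on $B_{2\rho}(x_i)\times G$, to which the Riemannian form of Theorem \ref{thm:Uhlenbeck_3-5} applies.

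Next I would record that this Riemannian version holds on each $B_{2\rho}(x_i)$ with constants $K_0$ and $\eps_0$ that, although a priori depending on $d$ together with bounds on $\Riem_g$ over $B_{2\rho}(x_i)$ and on the injectivity radius at $x_i$, may be chosen uniform over $i=1,\dots,N$ --- hence depending only on $d$ and $g$ --- because $X$ is compact. Since $B_{2\rho}(x_i)\subset X$ and restriction to a subset does not increase the $L^{d/2}$ norm (valid as $d/2\ge 1$), the hypothesis \eqref{cor:Uhlenbeck_3-5_FA_Ld_over2_small_manifold} with $\eps := \eps_0$ gives $\|F_A\|_{L^{d/2}(B_{2\rho}(x_i))} \le \|F_A\|_{L^{d/2}(X)} \le \eps_0$, so Theorem \ref{thm:Uhlenbeck_3-5} is in force on $B_{2\rho}(x_i)$; taking there $B_r(x_0) := B_\rho(x_i)$ (that is, $x_0 = x_i$ and $r=\rho$) yields
\[
\|F_A\|_{L^\infty(B_\rho(x_i))} \le K_0\,\rho^{-d/2}\,\|F_A\|_{L^2(B_\rho(x_i))} \le K_0\,\rho^{-d/2}\,\|F_A\|_{L^2(X)}, \qquad i = 1,\dots,N.
\]
Because $|F_A|$ is gauge invariant, hence a well-defined function on all of $X$, and the balls $B_\rho(x_i)$ cover $X$, taking the maximum over $i$ gives $\|F_A\|_{L^\infty(X)} \le K_0\rho^{-d/2}\|F_A\|_{L^2(X)}$. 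Setting $K := \max\{1, K_0\rho^{-d/2}\}$, which depends only on $d$ and $g$, completes the argument.

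There is no serious obstacle here; the only points that need a word of care are (i) that the local $L^{d/2}$-smallness hypotheses required to invoke Theorem \ref{thm:Uhlenbeck_3-5} follow from the global one merely because passing to a subdomain does not increase the $L^{d/2}$ norm; (ii) that compactness of $X$ lets us take the constants $K_0,\eps_0$ from the Riemannian form of Theorem \ref{thm:Uhlenbeck_3-5} uniformly over the finite cover, so that the final constants depend only on $d$ and $g$; and (iii) that the pointwise curvature bound is gauge invariant and therefore patches across the local trivializations into a genuine statement about $\|F_A\|_{L^\infty(X)}$.
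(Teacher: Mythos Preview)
Your proposal is correct and follows essentially the same approach as the paper: the paper's proof is just the covering argument sketched in the paragraph immediately preceding the corollary, and you have filled in exactly those details (finite cover by geodesic balls of radius $\rho \in (0,\Inj(X,g)/4]$, application of the Riemannian version of Theorem~\ref{thm:Uhlenbeck_3-5} on each $B_{2\rho}(x_i)$, and uniformity of the constants by compactness).
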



The restriction $d \geq 3$ in Theorem \ref{thm:Uhlenbeck_3-5} (and hence Corollary \ref{cor:Uhlenbeck_3-5_manifold}) was not explicitly stated by Uhlenbeck in her \cite[Theorem 3.5]{UhlRem} (although it does appear in her \cite[Corollary 2.9]{UhlRem}). However, the condition $d \geq 3$ can be inferred from Uhlenbeck's proof of \cite[Theorem 3.5]{UhlRem}, in particular through her proof of the required \cite[Lemma 3.3]{UhlRem}, where the exponent $\nu=2d/(d-2)$ is undefined when $d=2$. The restriction $d \geq 3$ also appears in Sibner's proof of her \apriori $L^\infty$ estimate for $|F_A|$ in \cite[Proposition 1.1]{Sibner_1984}, where the necessity of the condition appears in her definition \cite[p. 94]{Sibner_1984} of the positive constant $\gamma_1 := (2d-4)/(d^2C_d)$, with $C_d$ denoting a Sobolev embedding constant in dimension $d$. When $d=2$, the proof of \cite[Theorem 4.1]{Smith_1990} due to Smith implies an \apriori $L^p$ estimate for $|F_A|$ (for $1\leq p<\infty$) that is sufficient for the purposes of our proof of Theorem \ref{mainthm:Main} in the case $d=2$; see Lemma \ref{lem:Smith_theorem_4-1}.

\section{Global existence of a flat connection and a Sobolev distance estimate}
\label{sec:Uhlenbeck_approach_existence_flat_connection_and_apriori_estimate}
In Feehan \cite{Feehan_nonlinear_uhlenbeck_estimate}, we proved the following correction to Uhlenbeck's \cite[Corollary 4.3]{UhlChern} (which we had attempted to reprove as \cite[Theorem 5.1]{Feehan_yangmillsenergygapflat_aim}) and a global version of her Theorem \ref{thm:Uhlenbeck_Lp_1-3}:

\begin{thm}[Existence of a nearby $W^{1,p}$ flat connection on a principal bundle supporting a $W^{1,p}$ connection with $L^p$-small curvature]
\label{thm:Uhlenbeck_Chern_corollary_4-3}
(See Feehan \cite[Theorems 1 and 9]{Feehan_nonlinear_uhlenbeck_estimate} for a more general statement.)
Let $X$ be a closed, smooth manifold of dimension $d\geq 2$ and endowed with a Riemannian metric, $g$, and $G$ be a compact Lie group, and $p \in (d/2, \infty)$. Then there are a constant $\eps=\eps(g,G,p) \in (0,1]$ and, for any $r \in (1,p]$, a constant $C=C(g,G,r) \in [1,\infty)$ with the following significance. Let $A$ be a $W^{1,p}$ connection on a principal $G$-bundle $P$ over $X$. If
\begin{equation}
\label{eq:Curvature_Lp_small}
\|F_A\|_{L^p(X)} \leq \eps,
\end{equation}
then there are a $W^{1,p}$ flat connection $\Gamma$ on $P$, a constant $\nu=\nu(g,G,[\Gamma]) \in (0,1]$, and a $W^{2,p}$ gauge transformation $u \in \Aut(P)$ such that
\begin{align}
\label{eq:Uhlenbeck_Chern_corollary_4-3_uA-Gamma_global_Coulomb_gauge}
d_\Gamma^*(u(A) - \Gamma) &= 0 \quad\text{a.e. on } X,
\\
\label{eq:Uhlenbeck_Chern_corollary_4-3_uA-Gamma_W1p_bound}
\|u(A)-\Gamma\|_{W_\Gamma^{1,r}(X)} &\leq C\|F_A\|_{L^r(X)}^\nu.
\end{align}
Moreover, if $d\geq 3$ or $d=2$ and
$p>4/3$, then we may assume that $\Gamma$ is $C^\infty$. 
\end{thm}

The Coulomb gauge condition \eqref{eq:Uhlenbeck_Chern_corollary_4-3_uA-Gamma_global_Coulomb_gauge} is implied by Uhlenbeck's proof of \cite[Corollary 4.3]{UhlChern}, but was not explicitly stated there. The difference between our Theorem \ref{thm:Uhlenbeck_Chern_corollary_4-3} and Uhlenbeck's \cite[Corollary 4.3]{UhlChern} is that we only assert that \eqref{eq:Uhlenbeck_Chern_corollary_4-3_uA-Gamma_W1p_bound} holds for some $\nu(g,G,[\Gamma]) \in (0,1]$ and not necessarily for $\nu=1$. In Appendix \ref{subsec:Estimate_Sobolev_distance_flat_connection_Hodge_Laplacian_vanishing_kernel}, we give a proof that \eqref{eq:Uhlenbeck_Chern_corollary_4-3_uA-Gamma_W1p_bound} holds with $\nu=1$ in the special case where $\Ker\Delta_\Gamma\cap \Omega^1(X;\ad P) = \{0\}$, where we assume that $\Gamma$ is $C^\infty$ for simplicity. More generally (see \cite[Theorem 9]{Feehan_nonlinear_uhlenbeck_estimate}), if the Yang--Mills energy function $\sE$ in \eqref{eq:Yang-Mills_energy_functional} is \emph{Morse--Bott} at the point $[\Gamma]$ in the moduli space of flat connections $M(P)$ in the sense that
\[
  \bU_\Gamma(\delta) := \Gamma+\left\{a\in\Ker d_\Gamma^*\cap \Omega^1(X;\ad P):\|a\|_{W_\Gamma^{1,p}(X)} < \delta \text{ and } F_{\Gamma+a}=0\right\}
\]
is a smooth manifold for small enough $\delta=(g,G,p,\Gamma)\in(0,1]$ and
\[
  T_\Gamma \bU_\Gamma(\delta) = \Ker\sE''(\Gamma),
\]
where $\sE''(\Gamma) = \Ker\Delta_\Gamma$ is the Hessian operator on $\Omega^1(X;\ad P)$, then \eqref{eq:Uhlenbeck_Chern_corollary_4-3_uA-Gamma_W1p_bound} also holds with $\nu=1$.

Donaldson and Kronheimer \cite[Proposition 4.4.11]{DK} employ the local Coulomb-gauge \eqref{eq:Uhlenbeck_1-3_W1p_norm_connection_one-form_leq_constant_Lp_norm_curvature} and a patching argument to prove that \eqref{eq:Uhlenbeck_Chern_corollary_4-3_uA-Gamma_W1p_bound} holds with $\nu=1$ when $X$ is \emph{strongly simply connected} and $p=2$ and $d=2,3$ and $\Gamma=\Theta$ but remark \cite[p. 163]{DK} that their result extends to $d = 4$ and $p>2$. In \cite[Proposition 4.4.11]{DK}, it is not claimed that $d_\Theta^*(u(A)-\Theta)=0$. Recall that $X$ is strongly simply connected \cite[p. 161]{DK} if it can be covered by smoothly embedded balls $B_1,\ldots,B_m$ such that for any $2 \leq r \leq m$, the intersection $B_r \cap (B_1\cup\cdots\cup B_{r-1})$ is connected; the condition implies that $X$ is simply connected.

Fukaya \cite[Proposition 3.1]{Fukaya_1998} proved that a version\footnote{Fukaya uses a different system of norms.} of \eqref{eq:Uhlenbeck_Chern_corollary_4-3_uA-Gamma_W1p_bound} holds when $d=4$, and $X$ is a compact manifold with boundary, and $A$ is anti-self-dual. Fukaya's proof of \cite[Proposition 3.1]{Fukaya_1998} uses the local Coulomb-gauge \eqref{eq:Uhlenbeck_1-3_W1p_norm_connection_one-form_leq_constant_Lp_norm_curvature} and difficult patching argument. In \cite[Proposition 3.1]{Fukaya_1998}, it is not claimed that $d_\Gamma^*(u(A)-\Gamma)=0$. He explained to us \cite{Fukaya_9-25-2018} that his proof should extend to allow arbitrary dimensions $d\geq 2$, connections $A$ on $P$ of Sobolev class $W^{1,p}$, and the system of Sobolev norms in \eqref{eq:Uhlenbeck_Chern_corollary_4-3_uA-Gamma_W1p_bound}.  If $X$ is a compact manifold without boundary and $A$ is anti-self-dual and $\|F_A\|_{L^2(X)}$ is smaller than a constant that depends at most on $G$, then the Chern--Weil Theorem (see Milnor and Stasheff \cite[Appendix C]{MilnorStasheff}) would imply that $A$ is necessarily flat.

Nishinou \cite{Nishinou_2007} proved that a version\footnote{Nishinou uses a different system of norms.} of \eqref{eq:Uhlenbeck_Chern_corollary_4-3_uA-Gamma_W1p_bound} holds when $X=\TT^2$ (the real two-dimensional torus) and $P = \TT^2\times\SU(2)$ and and $\Gamma$ is the product connection and $\nu = 1/2$.

In \cite[Appendix A]{Feehan_nonlinear_uhlenbeck_estimate}, we develop an example due to Mrowka \cite{Mrowka_7-30-2018} which shows that \eqref{eq:Uhlenbeck_Chern_corollary_4-3_uA-Gamma_W1p_bound} cannot hold for $\nu>1/2$ when $A_t$, for $t\in (-\delta,\delta)$, is a certain family of smooth connections on $\TT^2\times\SU(2)$ in Coulomb gauge with respect to the product connection $\Theta$. There is no other flat connection $\Gamma$ on $\TT^2\times\SU(2)$ that is in Coulomb gauge with respect to $\Theta$ and closer in the $W^{1,p}$ norm to $A_t$, for $t\in (-\delta,\delta)$, and that is no gauge transformation $u\in\Aut(P)$ that can be used to improve the estimate \eqref{eq:Uhlenbeck_Chern_corollary_4-3_uA-Gamma_W1p_bound} by replacing $A_t$ by $u(A_t)$.

The argument provided by Uhlenbeck in her proof of \cite[Corollary 4.3]{UhlChern} was very brief and that prompted us to attempt a more detailed justification in \cite{Feehan_yangmillsenergygapflat_aim, Feehan_yangmillsenergygapflat_arxiv_v7} using using the local Coulomb-gauge \eqref{eq:Uhlenbeck_1-3_W1p_norm_connection_one-form_leq_constant_Lp_norm_curvature} in Theorem \ref{thm:Uhlenbeck_Lp_1-3} and a patching argument, which is incorrect, as we explain in \cite{Feehan_yangmillsenergygapflat_corrigendum}. 

The estimate \eqref{eq:Uhlenbeck_Chern_corollary_4-3_uA-Gamma_W1p_bound} may be expressed in a more invariant way that is also more suggestive of the relevance of versions of the {\L}ojasiewicz--Simon gradient inequality (compare S.-Z. Huang \cite[Theorem 2.3.1 (i)]{Huang_2006}, {\L}ojasiewicz \cite{Lojasiewicz_1963, Lojasiewicz_1965, Lojasiewicz_1993}, and Simon \cite[Equations (2.1) and (2.3)]{Simon_1983}),
\[
\dist_{W^{1,p}(X)}\left([A], M(P)\right) \leq C\|F_A\|_{L^p(X)}^\nu,
\]
where
\[
\dist_{W^{1,p}(X)} \left([A], M(P)\right)
:=
\inf_{\begin{subarray}{c} u \in \Aut(P), \\ [\Gamma] \in M(P) \end{subarray}}
\|u(A) - \Gamma\|_{W^{1,p}(X)}.
\]
The proof of the existence of the flat connection $\Gamma$ in Theorem \ref{thm:Uhlenbeck_Chern_corollary_4-3} follows by standard arguments (see \cite{DK, FU}), though we included the details for completeness in the proof of our \cite[Theorem 1]{Feehan_nonlinear_uhlenbeck_estimate}.

\begin{rmk}[On Theorem \ref{thm:Uhlenbeck_Chern_corollary_4-3} for vector bundles]
\label{rmk:Uhlenbeck_Chern_corollary_4-3_for_vector_bundles}
For the sake of consistency with the remainder of our article, we have converted
\cite[Corollary 4.3]{UhlChern} to the equivalent setting of a principal $G$-bundle rather than its original setting of a vector bundle $E$ with compact Lie structure group, $G$, and an orthogonal representation, $G\hookrightarrow \SO(l)$, for some integer $l\geq 2$ as in Uhlenbeck \cite[Section 1]{UhlRem}.
\end{rmk}

\section{Completion of the proof of Theorem \ref{mainthm:Main}}
\label{sec:Completion_proof_main_theorem}
We first note the following consequence of Corollary \ref{cor:Uhlenbeck_3-5_manifold} and Theorem \ref{thm:Uhlenbeck_Chern_corollary_4-3}.

\begin{cor}[Existence of a nearby flat connection on a principal bundle supporting a $C^\infty$ Yang--Mills connection with $L^{d/2}$-small curvature]
\label{cor:Uhlenbeck_Chern_A_Yang-Mills_FA_Ldover2_small}
Let $X$ be a closed, smooth manifold of dimension $d\geq 2$ and endowed with a Riemannian metric, $g$, and $G$ be a compact Lie group, and $p \in (d/2, \infty)$. Then there are a constant $\eps=\eps(g,G,p) \in (0,1]$ and, for any $r \in (1,p]$, a constant $C=C(g,G,r) \in [1,\infty)$ with the following significance. Let $A$ be a $C^\infty$ Yang--Mills connection on a $C^\infty$ principal $G$-bundle $P$ over $X$. If the curvature $F_A$ obeys \eqref{eq:Curvature_Ldover2_small}, that is,
\[
\|F_A\|_{L^{d/2}(X)} \leq \eps,
\]
then there are a $C^\infty$ flat connection $\Gamma$ on $P$, a constant $\nu=\nu(g,G,[\Gamma]) \in (0,1]$, and a $C^\infty$ gauge transformation $u \in \Aut(P)$ such that
\begin{align}
\label{eq:Uhlenbeck_Chern_A_Yang-Mills_uA-Gamma_global_Coulomb_gauge}
d_\Gamma^*(u(A) - \Gamma) &= 0 \quad\text{a.e. on } X,
\\
\label{eq:Uhlenbeck_Chern_A_Yang-Mills_uA-Gamma_W1p_bound}
\|u(A)-\Gamma\|_{W_\Gamma^{1,r}(X)} &\leq C\|F_A\|_{L^r(X)}^\nu.
\end{align}
\end{cor}

\begin{proof}
For any $d\geq 3$ or $d=2$ and $p \geq 1$, the estimates \eqref{cor:Uhlenbeck_3-5_Linfty_norm_FA_leq_constant_L2_norm_FA_manifold} in Corollary \ref{cor:Uhlenbeck_3-5_manifold} and \eqref{eq:Uhlenbeck_3-5_Lp_norm_FA_leq_constant_L1_norm_FA_manifold} in Corollary \ref{cor:Smith_theorem_4-1_manifold}, respectively, yield
\begin{subequations}
\label{eq:LpnormFA_leq_constant_L2normFA}
\begin{align}
\label{eq:LpnormFA_leq_constant_L2normFA_d_geq_3}
\|F_A\|_{L^p(X)} &\leq \left(\Vol_g(X)\right)^{1/p}\|F_A\|_{L^\infty(X)}
\leq K\left(\Vol_g(X)\right)^{1/p}\|F_A\|_{L^2(X)} \quad (d \geq 3),
\\
\label{eq:LpnormFA_leq_constant_L2normFA_d_is_2}
\|F_A\|_{L^p(X)} &\leq K_p\|F_A\|_{L^1(X)} = K_p\|F_A\|_{L^{d/2}(X)} \quad (d = 2),
\end{align}
\end{subequations}
for $K = K(g) \in [1,\infty)$ and $K_p = K_p(g,p) \in [1,\infty)$. If $d > 4$, then (writing $1/2 = (d-4)/(2d) + 2/d$)
\begin{equation}
\label{eq:L2normFA_leq_constant_Ldover2normFA_d_bigger_4}
\|F_A\|_{L^2(X)} \leq \left(\Vol_g(X)\right)^{2d/(d-4)}\|F_A\|_{L^{d/2}(X)}, \quad\forall\, d \geq 5.
\end{equation}
If $d=3$, then $L^p$ interpolation \cite[Equation (7.9)]{GilbargTrudinger} implies that
\[
\|F_A\|_{L^2(X)} \leq \|F_A\|_{L^{3/2}(X)}^\lambda \|F_A\|_{L^r(X)}^{1-\lambda},
\]
where the exponent $r$ obeys $2 < r \leq \infty$ and the constant $\lambda \in (0,1)$ is defined by $1/2 = \lambda/(3/2) + (1-\lambda)/r$. We may choose $r = \infty$ and thus $\lambda = 3/4$ to give
\begin{align*}
\|F_A\|_{L^2(X)} &\leq \|F_A\|_{L^{3/2}(X)}^{3/4} \|F_A\|_{L^\infty(X)}^{1/4}
\\
&\leq \|F_A\|_{L^{3/2}(X)}^{3/4} \left(K\|F_A\|_{L^2(X)}\right)^{1/4}
\quad\text{(by Corollary \ref{cor:Uhlenbeck_3-5_manifold})},
\end{align*}
and thus
\begin{equation}
\label{eq:L2normFA_leq_constant_Ldover2normFA_d_leq_4}
\|F_A\|_{L^2(X)} \leq K^{(4-d)/d}\|F_A\|_{L^{d/2}(X)}, \quad d=3,4.
\end{equation}
Therefore, by combining \eqref{eq:LpnormFA_leq_constant_L2normFA} (for $d\geq 2$), \eqref{eq:L2normFA_leq_constant_Ldover2normFA_d_bigger_4} (for $d\geq 5$), and \eqref{eq:L2normFA_leq_constant_Ldover2normFA_d_leq_4} (for $d=3,4$), we obtain
\begin{equation}
\label{eq:LpnormFA_leq_constant_Ldover2normFA_d_geq_2_and_p_geq_1}
\|F_A\|_{L^p(X)} \leq C_1\|F_A\|_{L^{d/2}(X)}, \quad\forall\, d \geq 2 \text{ and } p \geq 1,
\end{equation}
for $C_1 = C_1(g,p) \in [1,\infty)$. Hence, the preceding inequality and the hypothesis \eqref{eq:Curvature_Ldover2_small}, namely $\|F_A\|_{L^{d/2}(X)} \leq \eps$, of Corollary \eqref{cor:Uhlenbeck_Chern_A_Yang-Mills_FA_Ldover2_small} ensure that the hypothesis \eqref{eq:Curvature_Lp_small} of Theorem \ref{thm:Uhlenbeck_Chern_corollary_4-3} applies for small enough $\eps=\eps(g,G) \in (0,1]$ by taking $p = (d+1)/2$ in  \eqref{eq:Curvature_Lp_small}. The conclusions now follow from Theorem \ref{thm:Uhlenbeck_Chern_corollary_4-3} and Remark \ref{rmk:Uhlenbeck_theorem_1-3_Wkp} for smoothness of $u$. 
\end{proof}

\begin{rmk}[Stronger form of the estimate \eqref{eq:Uhlenbeck_Chern_A_Yang-Mills_uA-Gamma_W1p_bound}]
\label{rmk:Uhlenbeck_Chern_A_Yang-Mills_FA_Ldover2_small_estimate_scale_invariance}
The estimate \eqref{eq:Uhlenbeck_Chern_A_Yang-Mills_uA-Gamma_W1p_bound} has been left in the invariant form provided by Theorem \ref{thm:Uhlenbeck_Chern_corollary_4-3}, but it could easily be improved (by replacing $\|F_A\|_{L^p(X)}$ on the right-hand side with $\|F_A\|_{L^2(X)}$ when $p > 2$) with the aid of Corollary \ref{cor:Uhlenbeck_3-5_manifold} since $A$ is a Yang--Mills connection.
\end{rmk}

We can now finally complete the

\begin{proof}[Proof of Theorem \ref{mainthm:Main}]
For small enough $\eps = \eps(g,G) \in (0,1]$, Corollary \ref{cor:Uhlenbeck_Chern_A_Yang-Mills_FA_Ldover2_small} provides a smooth flat connection $\Gamma$ on $P$, a constant $\nu = \nu(g,G,[\Gamma]) \in (0,1]$, and a smooth gauge transformation $u\in\Aut(P)$, and the estimate
\[
\|u(A) - \Gamma\|_{W_\Gamma^{1,p}(X)} \leq C_0\|F_A\|_{L^p(X)}^\nu,
\]
for $p = \min\{2, d/2\}$ and $C_0 = C_0(g,G) \in [1,\infty)$. The preceding inequality ensures that the hypothesis \eqref{eq:Rade_7-1_neighborhood_flat} of Corollary \ref{cor:Rade_proposition_7-2_flat} holds,
\[
\|u(A) - \Gamma\|_{W^{1,p}_\Gamma(X)} < \sigma,
\]
provided, for example, $\|F_A\|_{L^p(X)} \leq (\sigma/(2C_0))^{1/\nu}$. The latter condition is ensured in turn by the hypothesis \eqref{eq:Curvature_Ldover2_small}, namely $\|F_A\|_{L^{d/2}(X)} \leq \eps$, of Theorem \ref{mainthm:Main} for small enough $\eps = \eps(g,G) \in (0,1]$, since \eqref{eq:LpnormFA_leq_constant_L2normFA_d_is_2} and \eqref{eq:L2normFA_leq_constant_Ldover2normFA_d_leq_4} give
\[
\|F_A\|_{L^2(X)} \leq C_1\|F_A\|_{L^{d/2}(X)}, \quad\text{for } d=2,3,
\]
for $C_1 = C_1(g) \in [1, \infty)$. Indeed, the constant
\[
\eps := \begin{cases} \sigma/(2C_0) &\text{for } d \geq 4, \\ \sigma/(2C_0C_1) &\text{for } d=2,3, \end{cases}
\]
will suffice. If $p' = p/(p-1) \in (1,2]$ is the H\"older exponent dual to $p \in [2,\infty)$, then the Sobolev Embedding \cite[Theorem 4.12]{AdamsFournier} (for $d \geq 2$) implies that $W^{1,p'}(X) \subset L^r(X)$ is a continuous embedding if
\begin{inparaenum}[(\itshape i\upshape)]
\item $1 < p' < d$ and $1 < r = (p')^* := dp'/(d-p') \in (1,\infty)$, or
\item $p' = d$ and $1 < r < \infty$, or
\item $d < p' < \infty$ and $r = \infty$.
\end{inparaenum}
Since $d\geq 2$ by hypothesis, only the first two cases can occur and by duality and density, we obtain a continuous Sobolev embedding, $L^{r'}(X) \subset W^{-1,p}(X)$, where $r' = r/(r-1) \in (1,\infty)$ is the H\"older exponent dual to $r \in (1,\infty)$. The Kato Inequality \cite[Equation (6.20)]{FU} implies that the norm of the induced Sobolev embedding, $W_\Gamma^{1,p'}(X;\Lambda^1\otimes\ad P) \subset L^r(X;\Lambda^1\otimes\ad P)$, is independent of $\Gamma$, and hence the norm, $\kappa = \kappa(g,p) \in [1,\infty)$ of the dual Sobolev embedding, $L^{r'}(X;\Lambda^1\otimes\ad P) \subset W_\Gamma^{-1,p}(X;\Lambda^1\otimes\ad P)$, is also independent of $\Gamma$. The preceding embedding and the {\L}ojasiewicz--Simon gradient inequality, Corollary \ref{cor:Rade_proposition_7-2_flat} applied to $u(A)$, now yield
\[
\|d_A^*F_A\|_{L^{r'}(X)} \geq \kappa^{-1}c|\sE(A)|^\theta,
\]
noting that each side of the inequality \eqref{eq:Rade_7-1_flat} remains unchanged when $u(A)$ is interchanged with $A$. But $A$ is a Yang--Mills connection, so $d_A^*F_A = 0$ on $X$ and $\sE(A) = \frac{1}{2}\|F_A\|_{L^2(X)} = 0$ by \eqref{eq:Yang-Mills_energy_functional} and thus $A$ must be a flat connection.
\end{proof}

\appendix

\section{Alternative proofs under simplifying hypotheses}
\label{app:Alternative_proofs_simplifying_hypotheses}
The proofs of several results described in this article simplify considerably under the assumption of additional hypotheses. We discuss these simpler proofs in this Appendix.

\subsection{Bochner--Weitzenb\"ock formula and existence of an $L^{d/2}$-energy gap}
\label{subsec:BW_formula_existence_Ldover2_energy_gap}
When the articles by Bourguignon, Lawson, and Simons
\cite{Bourguignon_Lawson_1981, Bourguignon_Lawson_Simons_1979} were developed, the \apriori estimate (Theorem \ref{thm:Uhlenbeck_3-5}) due to Uhlenbeck for the curvature, $F_A$, of Yang--Mills connection, $A$, had not yet been published. In particular, their energy gap results are phrased in terms of $L^\infty$ rather than $L^{d/2}$-small enough curvature, $F_A$. However, \apriori estimates for Yang--Mills connections were incorporated by Donaldson and Kronheimer in the proof of their $L^2$-energy gap result \cite[Lemma 2.3.24]{DK} for a Yang--Mills connection over the four-dimensional sphere, $S^4$, with its standard round metric of radius one. In this subsection, we describe the minor modifications required to extend their result to the case of a closed Riemannian manifold, $X$, of dimension $d\geq 2$ and whose curvature obeys the positivity condition \eqref{eq:Bourguignon_Lawson_1981_equation_5-1_BW_Riemann_curvature_positivity}. This result, with a proof that is somewhat different to that of \cite[Lemma 2.3.24]{DK} and the argument described here, was provided by Gerhardt in \cite[Theorem 1.2]{Gerhardt_2010}.

\begin{thm}[$L^{d/2}$-energy gap for Yang--Mills connections over Riemannian manifolds with positive curvature]
\label{thm:Donaldson_Kronheimer_Lemma_2-3-24_generalization}
Let $X$ be a closed, smooth manifold of dimension $d \geq 2$ and endowed with a smooth Riemannian metric, $g$, whose curvature obeys \eqref{eq:Bourguignon_Lawson_1981_equation_5-1_BW_Riemann_curvature_positivity}. Then there is a positive constant, $\eps = \eps(d, g) \in (0, 1]$, with the following significance. Let $G$ be a compact Lie group. If $A$ is a smooth connection, on a principal $G$-bundle $P$, that is Yang--Mills with respect to the metric, $g$, and whose curvature, $F_A$, obeys \eqref{eq:Curvature_Ldover2_small}, then $A$ is a flat connection.
\end{thm}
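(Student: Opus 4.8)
The plan is to follow the classical Bochner--Weitzenb\"ock argument, as in Donaldson--Kronheimer \cite[Lemma 2.3.24]{DK}, but upgrading the $L^\infty$-smallness hypothesis to an $L^{d/2}$-smallness hypothesis using the \apriori estimate for Yang-Mills connections, Corollary \ref{cor:Uhlenbeck_3-5_manifold}. The key point is that the curvature of a Yang-Mills connection on a closed manifold is always smooth, hence $F_A \in \Omega^2(X;\ad P)$, so one may apply the Bochner--Weitzenb\"ock formula \eqref{eq:Bourguignon_Lawson_1981_Theorem 3-10_and_Equation_5-1_Hodge_Laplacian} directly to $v = F_A$. Since $A$ is Yang-Mills, $d_A^*F_A = 0$, and since $F_A$ always obeys the Bianchi identity $d_AF_A = 0$, the form $F_A$ is $d_A$-harmonic, so the left-hand side $\Delta_A F_A$ vanishes.

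First I would record the resulting identity: pairing the Bochner--Weitzenb\"ock formula with $F_A$ in $L^2$ and integrating by parts gives
\[
0 = \|\nabla_A F_A\|_{L^2(X)}^2 + \int_X \langle F_A\circ(\Ric_g\wedge I + 2\Riem_g), F_A\rangle\,d\vol_g + \int_X \langle \{F_A,F_A\}, F_A\rangle\,d\vol_g.
\]
By the positivity hypothesis \eqref{eq:Bourguignon_Lawson_1981_equation_5-1_BW_Riemann_curvature_positivity}, the middle term is bounded below by $\lambda_g\|F_A\|_{L^2(X)}^2$. The cubic term is a universal pointwise expression trilinear in $F_A$, so it is bounded in absolute value by $c(G)\int_X |F_A|^3\,d\vol_g$. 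Hence
\[
\|\nabla_A F_A\|_{L^2(X)}^2 + \lambda_g\|F_A\|_{L^2(X)}^2 \leq c(G)\int_X |F_A|^3\,d\vol_g \leq c(G)\|F_A\|_{L^\infty(X)}\|F_A\|_{L^2(X)}^2.
\]

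Next I would invoke Corollary \ref{cor:Uhlenbeck_3-5_manifold}: assuming $\|F_A\|_{L^{d/2}(X)} \leq \eps$ for a first threshold $\eps = \eps(d,g)$, we get $\|F_A\|_{L^\infty(X)} \leq K\|F_A\|_{L^2(X)}$, and by the interpolation/volume estimates already assembled in the proof of Corollary \ref{cor:Uhlenbeck_Chern_A_Yang-Mills_FA_Ldover2_small} (specifically \eqref{eq:L2normFA_leq_constant_Ldover2normFA_d_bigger_4} and \eqref{eq:L2normFA_leq_constant_Ldover2normFA_d_leq_4}, which together give $\|F_A\|_{L^2(X)} \leq C_1(d,g)\|F_A\|_{L^{d/2}(X)}$), we conclude $\|F_A\|_{L^\infty(X)} \leq KC_1\eps$. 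Feeding this back, $c(G)\|F_A\|_{L^\infty(X)} \leq c(G)KC_1\eps < \lambda_g$ once $\eps = \eps(d,g,G)$ is chosen small enough. Then the displayed inequality forces $\|F_A\|_{L^2(X)} = 0$, hence $F_A \equiv 0$ and $A$ is flat. Note this also yields an explicit $\eps = \lambda_g/(2c(G)KC_1)$ (intersected with the thresholds from Corollary \ref{cor:Uhlenbeck_3-5_manifold}), which for $d=4$ recovers the Donaldson--Kronheimer constant up to the universal factors.

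The main obstacle is bookkeeping rather than conceptual: one must verify that the universal constant $c(G)$ controlling the cubic term $\langle\{F_A,F_A\},F_A\rangle$ depends only on $G$ (through the structure constants entering $\{\cdot,\cdot\}$ from \cite[Equation (3.7)]{Bourguignon_Lawson_1981}) and not on $A$ or the geometry, and that the middle curvature term is genuinely $\geq \lambda_g\|F_A\|_{L^2}^2$ pointwise after the $\circ$ and $\wedge$ operations of \cite[Equations (3.8), (3.9)]{Bourguignon_Lawson_1981} are unwound for a two-form valued in $\ad P$ --- i.e.\ that the eigenvalue bound \eqref{eq:Bourguignon_Lawson_1981_equation_5-1_BW_Riemann_curvature_positivity} for the operator on $\Lambda^2$ transfers to the tensored bundle $\Lambda^2\otimes\ad P$, which it does since $\ad P$ carries a parallel fiber metric and the operator acts only on the $\Lambda^2$ factor. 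Everything else is a direct quotation of Corollary \ref{cor:Uhlenbeck_3-5_manifold} and the interpolation inequalities already in the text.
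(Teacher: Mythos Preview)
Your proposal is correct and follows essentially the same approach as the paper: apply the Bochner--Weitzenb\"ock formula to the harmonic form $F_A$, use the positivity hypothesis on the curvature term, bound the cubic term via $\|F_A\|_{L^\infty}$, and invoke Corollary \ref{cor:Uhlenbeck_3-5_manifold} together with the $L^2$--$L^{d/2}$ interpolation to close the argument. The only cosmetic difference is that the paper phrases the last step as a contradiction (deriving a positive lower bound for $\|F_A\|_{L^2}$ if $A$ is not flat), whereas you absorb the cubic term directly; note also that the paper asserts the constant $c$ depends only on $g$ (not on $G$), so your $\eps(d,g,G)$ is slightly weaker than the stated $\eps(d,g)$.
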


\begin{proof}
We adapt the proof of \cite[Lemma 2.3.24]{DK}, where it is assumed that $X=S^d$ with its standard round metric of radius one and $d=4$; according to \cite[Corollary 3.14]{Bourguignon_Lawson_1981}, one has equality in \eqref{eq:Bourguignon_Lawson_1981_equation_5-1_BW_Riemann_curvature_positivity} with $\lambda_g = 2(d-2)$ when $X=S^d$. This property is noted in \cite[paragraph following Equation (2.3.18)]{DK} for the case $d=4$ and exploited in their proof of \cite[Lemma 2.3.24]{DK}. A closely related positivity result is noted by Gerhardt in \cite[Remark 1.1 (i)]{Gerhardt_2010}.

When $A$ is a Yang--Mills connection, that fact and the Bianchi identity \cite[Equation (2.1.21)]{DK} imply that $\Delta_A F_A = 0$ and so the Bochner--Weitzenb\"ock formula \eqref{eq:Bourguignon_Lawson_1981_Theorem 3-10_and_Equation_5-1_Hodge_Laplacian} yields
\[
(\nabla_A^*\nabla_A F_A, F_A)_{L^2(X)} + (F_A \circ (\Ric_g \wedge I + 2\Riem_g), F_A)_{L^2(X)}
+ (\{F_A, F_A\}, F_A)_{L^2(X)}  = 0.
\]
Therefore,
\[
\|\nabla_A F_A\|_{L^2(X)}^2 + (F_A \circ (\Ric_g \wedge I + 2\Riem_g), F_A)_{L^2(X)}
\leq c\|F_A\|_{L^2(X)}^2 \|F_A\|_{L^\infty(X)}.
\]
If \eqref{eq:Bourguignon_Lawson_1981_equation_5-1_BW_Riemann_curvature_positivity} holds, then the preceding inequality simplifies to give
\[
\|\nabla_A F_A\|_{L^2(X)}^2 + \lambda_g\|F_A\|_{L^2(X)}^2 \leq c\|F_A\|_{L^2(X)}^2 \|F_A\|_{L^\infty(X)},
\]
where $c$ is a positive constant depending at most on the Riemannian metric, $g$. But $\|F_A\|_{L^{d/2}(X)} \leq \eps$ by hypothesis \eqref{eq:Curvature_Ldover2_small} and if $\eps  = \eps(g) \in (0,1]$ is sufficiently small, then Corollary \ref{cor:Uhlenbeck_3-5_manifold} and the preceding inequality yield
\[
\lambda_g\|F_A\|_{L^2(X)}^2 \leq cK\|F_A\|_{L^2(X)}^3,
\]
for $K = K(g) \in [1,\infty)$. If $A$ is not flat, then $F_A$ must obey
\begin{equation}
\label{eq:Gerhardt_1-2_L2}
\|F_A\|_{L^2(X)} \geq \frac{\lambda_g}{cK} > 0.
\end{equation}
But from \eqref{eq:LpnormFA_leq_constant_Ldover2normFA_d_geq_2_and_p_geq_1} we have
\[
\|F_A\|_{L^2(X)} \leq C_1\|F_A\|_{L^{d/2}(X)}, \quad\forall\, d \geq 2,
\]
and $C_1 = C_1(g) \in [1,\infty)$. Combining the preceding inequality with \eqref{eq:Gerhardt_1-2_L2} gives
\[
\|F_A\|_{L^{d/2}(X)} \geq \frac{\lambda_g}{cC_1K} > 0,
\]
and hence a contradiction to \eqref{eq:Curvature_Ldover2_small} for small enough $\eps = \eps(g) \in (0,1]$ in \eqref{eq:Curvature_Ldover2_small}. This completes the proof of Theorem \ref{thm:Donaldson_Kronheimer_Lemma_2-3-24_generalization}.
\end{proof}

\subsection{Estimate of Sobolev distance to the flat connection when $\Ker\Delta_\Gamma = 0$}
\label{subsec:Estimate_Sobolev_distance_flat_connection_Hodge_Laplacian_vanishing_kernel}
It is illuminating to prove the estimate \eqref{eq:Uhlenbeck_Chern_corollary_4-3_uA-Gamma_W1p_bound} in Theorem \ref{thm:Uhlenbeck_Chern_corollary_4-3} in the simplest case, when
\begin{equation}
\label{eq:Hodge_Laplacian_flat_connection_vanishing_kernel}
\Ker\Delta_\Gamma \cap \Omega^1(X;\ad P) = 0,
\end{equation}
although not required in our article. The general case, when $\Ker\Delta_\Gamma \neq 0$, is more difficult and is proved independently by a quite different method in our article \cite{Feehan_nonlinear_uhlenbeck_estimate}. We first recall a simpler version of our correction to \cite[Corollary 4.3]{UhlChern} that is much easier to prove than our Theorem \ref{thm:Uhlenbeck_Chern_corollary_4-3} but yields a less powerful conclusion.

\begin{thm}[Existence of a flat connection on a principal bundle supporting a connection with $L^p$-small curvature]
\label{thm:Uhlenbeck_Chern_corollary_4-3_prelim}
(See Feehan \cite[Theorem 1]{Feehan_nonlinear_uhlenbeck_estimate}.)
Let $G$ be a compact Lie group, $P$ be a smooth principal $G$-bundle over a smooth Riemannian manifold $(X,g)$ of dimension $d \geq 2$, and $p \in (d/2,\infty)$, and $A_1$ be a $C^\infty$ reference connection on $P$. Then there is a constant $\eps=\eps(A_1,g,G,p,[P]) \in (0,1]$ with the following significance. If $A$ is a $W^{1,p}$ connection on $P$ with
\begin{equation}
\label{eq:Lp_norm_FA_lessthan_epsilon}
\|F_A\|_{L^p(X)} \leq \eps,
\end{equation}
then there are a $W^{1,p}$ flat connection $\Gamma$ on $P$ and a $W^{2,p}$ gauge transformation $v \in \Aut^{2,p}(P)$ such that
  \begin{align}
    \label{eq:Uhlenbeck_Chern_corollary_4-3_uA-Gamma_global_Coulomb_gauge_prelim}
    d_\Gamma^*(v(A) - \Gamma) &= 0 \quad\text{a.e. on } X,
    \\
    \label{eq:Uhlenbeck_Chern_corollary_4-3_uA-Gamma_W1p_bound_prelim}
    \|v(A)-\Gamma\|_{W_{A_1}^{1,r}(X)} &\leq C\|F_A\|_{L^r(X)} + C\|v(A)-\Gamma\|_{L^r(X)},
  \end{align}
for any $r \in (1,p]$ and corresponding constant $C=C(A_1,g,G,r) \in [1,\infty)$. If $d\geq 3$ or $d=2$ and
$p>4/3$, then we may assume that $\Gamma$ is $C^\infty$. If $\zeta \in (0,1]$ is a constant then, for a possibly smaller $\eps=\eps(A_1,g,G,p,[P],\sigma) \in (0,1]$, one has
    \begin{equation}
    \label{eq:Uhlenbeck_compactness_bound}
    \|v(A)-\Gamma\|_{W_{A_1}^{1,p}(X)} < \sigma.
  \end{equation}
\end{thm}

We can now proceed to the

\begin{proof}[Proof of the estimate \eqref{eq:Uhlenbeck_Chern_corollary_4-3_uA-Gamma_W1p_bound} in Theorem \ref{thm:Uhlenbeck_Chern_corollary_4-3} when $\Ker\Delta_\Gamma = 0$]
The first part of the proof of Theorem \ref{thm:Uhlenbeck_Chern_corollary_4-3_prelim} (which just relies on \cite[Theorem 1.5 or 3.6]{UhlLp} due to Uhlenbeck) yields a $W^{2,p}$ gauge transformation, $v \in \Aut(P)$, and a flat connection, $\Gamma$, on $P$ such that
\[
\|v(A) - \Gamma\|_{L^q(X)} \leq \zeta \quad\text{and}\quad \|v(A) - \Gamma\|_{W_\Gamma^{1,p}(X)} \leq C,
\]
where $d/2 < p < \infty$ and $d < q < 2p$ and $\zeta = \delta(g,G,q, \eps) \in (0,1]$ is small and $C = C(g,G,p) \in [1,\infty)$ is finite. For notational convenience, we may therefore assume
\[
\|A - \Gamma\|_{L^q(X)} \leq \zeta \quad\text{and}\quad \|A - \Gamma\|_{W_\Gamma^{1,p}(X)} \leq C.
\]
By \eqref{eq:Uhlenbeck_Chern_corollary_4-3_uA-Gamma_global_Coulomb_gauge} in Theorem \ref{thm:Uhlenbeck_Chern_corollary_4-3}, there exists a $W^{2,p}$ gauge transformation, $u\in \Aut(P)$, such that
\[
d_\Gamma^*(u(A)-\Gamma) = 0 \quad\text{a.e. on } X.
\]
As usual, while the hypothesis $p>d/2$ is required for the existence of a global gauge transformation, $u \in \Aut(P)$, since $W^{2,p}(X) \subset C(X)$ is a continuous embedding for $p > d/2$ but not for $p=d/2$, we shall see that --- given the existence of $u \in \Aut(P)$ already --- the estimate \eqref{eq:Uhlenbeck_Chern_corollary_4-3_uA-Gamma_W1p_bound} is valid for any $p$ in the range $d/2 \leq p < \infty$. Hence, for the remainder of the proof we allow $d/2 \leq p < \infty$ and separately consider the cases
\begin{inparaenum}[(\itshape i\upshape)]
\item $d/2 \leq p < d$,
\item $p=d$, and
\item $p > d$,
\end{inparaenum}
with one \emph{caveat} when $p=d/2$: this case still requires a hypothesis $\|F_A\|_{L^{p_0}(X)} \leq \eps$ for some $p_0 > d/2$ and thus $\|A - \Gamma\|_{L^q(X)} \leq \zeta$ for some $q$ in the range $d < q < 2p_0$.

We first consider the case $d/2 \leq p < d$ and write $u(A) = \Gamma + a$ and $F_{u(A)} = F_\Gamma + d_\Gamma a + a\wedge a$, that is, $F_{u(A)} = d_\Gamma a + a\wedge a$ and so the Coulomb gauge condition, $d_\Gamma^* a=0$ a.e. on $X$, yields the first-order, semi-linear elliptic equation,
\[
(d_\Gamma + d_\Gamma^*)a + a\wedge a = F_{u(A)} \quad\text{a.e. on } X.
\]
By replacing the appeal to the $C^1$ Inverse Function Theorem \cite[Theorem 2.5.2]{AMR} by the $C^2$ Quantitative Inverse Function Theorem \cite[Proposition 2.5.6]{AMR} in the proof of \eqref{eq:Uhlenbeck_Chern_corollary_4-3_uA-Gamma_global_Coulomb_gauge} in Theorem \ref{thm:Uhlenbeck_Chern_corollary_4-3}, we can assume that $a = u(A)-\Gamma$ remains $L^q$-small, say
\[
\|a\|_{L^q(X)} \leq cK_0\zeta,
\]
where\footnote{From Section \ref{subsec:Uhlenbeck_compactness_moduli_space_flat_connections}, one has Uhlenbeck compactness of the moduli space of flat connections on $P$ and so $\mu[\Gamma]$ has a positive lower bound, $\mu_0$, independent of $[\Gamma] \in M(P)$ by continuity of $\mu[\Gamma]$ with respect to $\Gamma$ if \eqref{eq:Hodge_Laplacian_flat_connection_vanishing_kernel} holds for every flat connection, $\Gamma$, on $P$.}
$K_0 = 1 + \mu[\Gamma]^{-1}$ and $\mu[\Gamma] > 0$ is the least eigenvalue of $\Delta_\Gamma$, which is positive by \eqref{eq:Hodge_Laplacian_flat_connection_vanishing_kernel}, and the positive constant, $c$, depends at most on the Riemannian metric, $g$, and Lie group, $G$. (See the proof of \cite[Theorem 8.2]{FeehanSlice}.) The operator,
\[
d_\Gamma + d_\Gamma^*: \Omega^1(X;\ad P) \to \Omega^2(X;\ad P)\oplus \Omega^0(X;\ad P),
\]
is first-order elliptic (since $(d_\Gamma^* + d_\Gamma)(d_\Gamma + d_\Gamma^*) = \Delta_\Gamma$, as $F_\Gamma \equiv 0$) and so one has an \apriori estimate (this is well-known but see \cite[Theorem 14.60]{Feehan_yang_mills_gradient_flow_v4} and references therein),
\[
\|a\|_{W_\Gamma^{1,p}(X)}
\leq
C\left(\|(d_\Gamma + d_\Gamma^*)a\|_{L^p(X)} + \|a\|_{L^p(X)}\right),
\]
for a positive constant, $C = C(g,G,p) \in [1, \infty)$. Therefore,
\[
\|a\|_{W_\Gamma^{1,p}(X)}
\leq
C\left(\|a\wedge a\|_{L^p(X)} + \|F_A\|_{L^p(X)} + \|a\|_{L^p(X)}\right).
\]
Define a Sobolev exponent, $r$, by $1/p = 1/q + 1/r$, where $d \leq q \leq 2p$ and $2p \leq r \leq dp/(d-p)$. The H\"older inequality then yields
\[
\|a\wedge a\|_{L^p(X)} \leq 2\|a\|_{L^q(X)} \|a\|_{L^r(X)}.
\]
Also, there is a continuous Sobolev embedding, $W^{1,p}(X) \subset L^s(X)$, for $1\leq p<d$ and $1 \leq s \leq p_* = dp/(d-p)$ by \cite[Theorem 4.12]{AdamsFournier}, and thus
\[
\|a\|_{L^{p_*}(X)} \leq C_0\|a\|_{W_\Gamma^{1,p}(X)},
\]
for $C_0  = C_0(g,p) \in [1,\infty)$. Since $r \leq p_*$, we have
\[
\|a\|_{L^r(X)} \leq C\|a\|_{W_\Gamma^{1,p}(X)},
\]
for $C = C(g,p,r) \in [1,\infty)$, and therefore
\[
\|a\wedge a\|_{L^p(X)} \leq C\|a\|_{L^q(X)} \|a\|_{W_\Gamma^{1,p}(X)},
\]
for $C = C(g,p,q) \in [1,\infty)$. Hence, we obtain
\[
\|a\|_{W_\Gamma^{1,p}(X)}
\leq
C\left(\|a\|_{L^q(X)} \|a\|_{W_\Gamma^{1,p}(X)} + \|F_A\|_{L^p(X)} + \|a\|_{L^p(X)}\right),
\]
for $C = C(g,G,p,q) \in [1,\infty)$. The assumption \eqref{eq:Hodge_Laplacian_flat_connection_vanishing_kernel} is equivalent to
\[
\Ker \left(d_\Gamma + d_\Gamma^*: \Omega^1(X;\ad P) \to \Omega^2(X;\ad P)\oplus \Omega^0(X;\ad P) \right) = 0,
\]
and so, in this case, the \apriori elliptic estimate for $d_\Gamma + d_\Gamma^*$ simplifies to
\[
\|a\|_{W_\Gamma^{1,p}(X)} \leq C\|(d_\Gamma + d_\Gamma^*)a\|_{L^p(X)},
\]
for $C = C(g,G,p) \in [1,\infty)$. Consequently,
\[
\|a\|_{W_\Gamma^{1,p}(X)}
\leq
C\left(\|a\|_{L^q(X)} \|a\|_{W_\Gamma^{1,p}(X)} + \|F_A\|_{L^p(X)}\right),
\]
for $C = C(g,G,p,q) \in [1,\infty)$. For $0 < \zeta < 1/(2C)$, where $C$ is as in the preceding estimate, we can use the bound $\|a\|_{L^q(X)} \leq \zeta$ and rearrangement to give
\[
\|a\|_{W_\Gamma^{1,p}(X)}
\leq
C\|F_A\|_{L^p(X)},
\]
as desired. This completes the proof of \eqref{eq:Uhlenbeck_Chern_corollary_4-3_uA-Gamma_W1p_bound} in Theorem \ref{thm:Uhlenbeck_Chern_corollary_4-3} under the additional assumption \eqref{eq:Hodge_Laplacian_flat_connection_vanishing_kernel}, if $p$ obeys $d/2 \leq p < d$.

If $p = d$, one applies the argument for the case $d/2 \leq p < d$ \mutatis but using the Sobolev embedding $W^{1,d}(X) \subset L^s(X)$ for $1 \leq s < \infty$ and the H\"older inequality with $r$ in the range $2d \leq r < \infty$ defined by $1/d  = 1/q + 1/r$, where $d < q \leq 2d$.

If $d < p < \infty$, one again applies the argument for the case $d/2 \leq p < d$ \mutatis but now using the Sobolev embedding $W^{1,p}(X) \subset L^\infty(X)$ and the H\"older inequality with $r$ in the range $2p \leq r \leq \infty$ defined by $1/p  = 1/q + 1/r$, where $p \leq q \leq 2p$.
\end{proof}

\begin{rmk}[On the assumption \eqref{eq:Hodge_Laplacian_flat_connection_vanishing_kernel} that $\Delta_\Gamma$ has zero kernel]
In general, we do not know that $\Ker \Delta_\Gamma \cap \Omega^1(X; \ad P) = 0$ unless we assume a technical hypothesis for $P$ that this kernel vanishing condition holds for all flat connections, $\Gamma$, on $P$ or else assume a topological hypothesis for $X$, such as $\pi_1(X)=\{1\}$, so $P \cong X\times G$ if and only if $P$ is flat \cite[Theorem 2.2.1]{DK}. In the latter case, $\Gamma$ is gauge-equivalent to the product connection and $\Ker \Delta_\Gamma \cong H^1(X;\RR)$, so an additional hypothesis for $X$ that $H^1(X;\ZZ) = 0$ would ensure the kernel vanishing condition \eqref{eq:Hodge_Laplacian_flat_connection_vanishing_kernel}.
\end{rmk}

\subsection{Existence of a flat connection when the curvature of the given connection is $L^\infty$-small}
\label{subsec:Yang_existence_flat_connection}
In \cite{Yang_2003pjm}, Yang observed that if one assumes that the given connection, $A$ on $P$, is smooth and has $L^\infty$-small curvature (rather than just $L^p$-small curvature for $p > d/2$), then one can give a more elementary (albeit lengthier) proof of existence of a flat connection $\Gamma$ in Theorem \ref{thm:Uhlenbeck_Chern_corollary_4-3}. Yang's results (Theorem \ref{thm:Yang_7}, Corollary \ref{cor:Yang_1}, and Theorem \ref{thm:Yang_3}) are not required elsewhere in our article.

\begin{thm}[Existence of a flat connection]
\label{thm:Yang_7}
(See \cite[Theorem 7]{Yang_2003pjm}.)
Let $G$ be a compact Lie group and $X$ be a compact, smooth manifold of dimension $d\geq 2$ endowed with a Riemannian metric, $g$. Let $\sU = \{U_\alpha\}_{\alpha\in\sI}$ be a finite open cover of $X$ and $g_{\alpha\beta}: U_{\alpha\beta} = U_\alpha \cap U_\beta \to G$ be a set of smooth transition functions, with respect to $\sU$, for a smooth principal $G$-bundle over $X$. Then there are constants $\eps = \eps(g,G,\sU) \in (0,1]$ and $C = C(g,G,\sU) \in [1,\infty)$, such that if
\[
\delta := \sup_{\begin{subarray}{c} x,y \in U_{\alpha\beta} \\ \alpha, \beta \in \sI \end{subarray}}
|g_{\alpha\beta}(x) - g_{\alpha\beta}(y)| < \eps,
\]
then there exist $\sV = \{V_\alpha\}_{\alpha\in \sI}$, a finite open cover of $X$ with $V_\alpha \subset U_\alpha$, a collection of \emph{constant} transition functions, $g_{\alpha\beta}^0: V_{\alpha\beta} = V_\alpha \cap V_\beta \to G$, and a collection of smooth functions, $\rho_\alpha : V_\alpha \to G$, such that
\[
\rho_\alpha g_{\alpha\beta} \rho_\beta^{-1} = g_{\alpha\beta}^0 \quad\text{on } V_\alpha\cap V_\beta,
\]
and
\[
\sup_{\begin{subarray}{c} x \in V_\alpha \\ \alpha\in \sI \end{subarray}}
\left|\rho_\alpha(x) - \id\right| < C\delta.
\]
In particular, the bundle defined by $\{g_{\alpha\beta}\}$ is isomorphic to the flat bundle defined by $\{g_{\alpha\beta}^0\}$.
\end{thm}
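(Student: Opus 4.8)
The plan is to build, by induction over the open sets of the cover, a \emph{constant} cocycle $\{g_{\alpha\beta}^0\}$ on a shrinking of $\sU$ together with maps $\rho_\alpha$ intertwining it with $\{g_{\alpha\beta}\}$, mimicking the inductive proof of Uhlenbeck's Proposition \ref{prop:Uhlenbeck_1982Lp_3-2} (whose engine is \cite[Lemma 3.1]{UhlLp}). Concretely: write $\sI = \{1,\dots,k\}$, pass to shrinkings $\sV$ and $\sW$ with $\ol{V_\alpha}\subset W_\alpha$ and $\ol{W_\alpha}\subset U_\alpha$ still covering $X$, and fix a smooth partition of unity $\{\varphi_\alpha\}$ subordinate to $\{W_\alpha\}$ with $\varphi_\alpha\equiv 1$ on a neighborhood of $\ol{V_\alpha}$. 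Set $\rho_1 := \id$ on $V_1$. Having constructed $\rho_1,\dots,\rho_{\gamma-1}$ so that $\rho_\alpha g_{\alpha\beta}\rho_\beta^{-1}$ is constant --- call it $g_{\alpha\beta}^0$ --- on $V_\alpha\cap V_\beta$ for all $\alpha,\beta<\gamma$, I would define $\rho_\gamma$ on $V_\gamma$ by the partition-of-unity/exponential formula of the type in \eqref{eq:Uhlenbeck_1982Lp_corollary_3-3_rho_beta_formula}: on the part of $V_\gamma$ met by lower sets one patches together the candidate values that force $\rho_\alpha g_{\alpha\gamma}\rho_\gamma^{-1}$ to equal the frozen constant $g_{\alpha\gamma}^0 := (\rho_\alpha g_{\alpha\gamma})(p_{\alpha\gamma})$ at a chosen reference point $p_{\alpha\gamma}$, while on the remainder of $V_\gamma$ one sets $\rho_\gamma := \id$, the cutoffs $\varphi_\alpha$ interpolating smoothly between the two regimes. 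Since $\rho_\gamma$ is then a smooth expression in the smooth data $g_{\alpha\beta}$, $\varphi_\alpha$, and $\exp^{\pm 1}$, smoothness of all the $\rho_\alpha$ is automatic.

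The key inductive estimate --- the only place the smallness of $\delta$ is used --- is that $\sup_{V_\gamma}|\rho_\gamma - \id|\leq C_\gamma\delta$: each candidate value differs from $\id$ by at most the sum of $\sup|\rho_\alpha - \id|$ (controlled inductively) and the oscillation of $g_{\alpha\gamma}$ on $U_\alpha\cap U_\gamma$ (at most $\delta$ by hypothesis), using that left/right translation and conjugation on the compact group $G$ are uniformly Lipschitz; chaining at most $k$ such steps gives a constant $C_\gamma = C_\gamma(d,g,G,\sU)$, and choosing $\eps = \eps(d,g,G,\sU)\in(0,1]$ so that $C_k\eps$ is below the radius of the neighborhood of $\id\in G$ on which $\exp^{-1}$ is defined keeps all the formulas meaningful and yields the bound $\sup|\rho_\alpha - \id|\leq C\delta$ with $C = C(d,g,G,\sU)$. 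One checks along the way that $\rho_\alpha g_{\alpha\beta}\rho_\beta^{-1}$ is indeed the constant $g_{\alpha\beta}^0$ on $V_\alpha\cap V_\beta$ --- this is precisely the content of the patching in Uhlenbeck's argument; granting it, the $g_{\alpha\beta}^0$ automatically form a constant cocycle, because the $g_{\alpha\beta}$ do and $g_{\alpha\beta}^0 = \rho_\alpha g_{\alpha\beta}\rho_\beta^{-1}$. Equivalently, one may organize this as: first produce the constant cocycle $\{g_{\alpha\beta}^0\}$ with $\sup|g_{\alpha\beta}-g_{\alpha\beta}^0|\leq C_0\delta$, and then quote Proposition \ref{prop:Uhlenbeck_1982Lp_3-2} verbatim, in its smooth-category form (Remark \ref{rmk:Uhlenbeck_1982Lp_proposition_3-2_principal_bundle_categories}), with $h_{\alpha\beta} := g_{\alpha\beta}^0$, to obtain the $\rho_\alpha$ and the estimate \eqref{eq:Uhlenbeck_1982Lp_proposition_3-2_C0_bound_rho_alpha-id}, choosing $\eps$ so that $C_0\eps$ lies below the threshold $\eps$ in the hypothesis \eqref{eq:Uhlenbeck_1982Lp_proposition_3-2_C0_small_g_alphabeta-h_alphabeta} of that proposition.

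Finally, the concluding assertion is immediate: $\{\rho_\alpha\}$ is an isomorphism of the transition cocycle of $P$, restricted to $\sV$, onto the constant cocycle $\{g_{\alpha\beta}^0\}$, and a constant cocycle defines a flat principal bundle --- its identically-zero local connection forms satisfying the compatibility conditions trivially, as recalled in Section \ref{subsec:Flat_bundles}. I expect the main obstacle to be the inductive patching itself: arranging the cutoffs and the reference points so that the modified transition functions become \emph{exactly} constant while the $\rho_\alpha$ stay uniformly within $O(\delta)$ of $\id$ through all $k = |\sI|$ steps. This is exactly Uhlenbeck's Lemma 3.1 together with Proposition \ref{prop:Uhlenbeck_1982Lp_3-2}, and the fact that the number of steps and the combinatorics of the overlaps enter the bookkeeping is what forces $\eps$ and $C$ in Theorem \ref{thm:Yang_7} to depend on the cover $\sU$.
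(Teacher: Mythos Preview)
The paper does not supply its own proof of Theorem \ref{thm:Yang_7}; it is quoted from \cite[Theorem 7]{Yang_2003pjm}, and the only hint the paper offers is the remark opening Section \ref{sec:Estimate_Sobolev_distance_to_flat_connection} that Yang's proof ``relies heavily on Proposition \ref{prop:Uhlenbeck_1982Lp_3-2}.'' Your approach (a) --- running the induction of \cite[Lemma 3.1 and Proposition 3.2]{UhlLp} while simultaneously constructing the constant target cocycle $g_{\alpha\beta}^0$ rather than being handed it --- is exactly consistent with that description, and your sketch of the inductive estimate and of why the $g_{\alpha\beta}^0$ inherit the cocycle condition is correct.

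One caution about your alternative organization (b): as written, ``first produce the constant cocycle $\{g_{\alpha\beta}^0\}$ with $\sup|g_{\alpha\beta}-g_{\alpha\beta}^0|\leq C_0\delta$, and then quote Proposition \ref{prop:Uhlenbeck_1982Lp_3-2}'' is circular. Freezing each $g_{\alpha\beta}$ at a reference point $p_{\alpha\beta}$ yields constants that are $\delta$-close to $g_{\alpha\beta}$ but only an \emph{approximate} cocycle (the triple product $g_{\alpha\beta}(p_{\alpha\beta})g_{\beta\gamma}(p_{\beta\gamma})g_{\gamma\alpha}(p_{\gamma\alpha})$ is merely $O(\delta)$-close to $\id$, not equal to it), so Proposition \ref{prop:Uhlenbeck_1982Lp_3-2} does not apply directly. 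Producing a genuine constant cocycle nearby is essentially the content of the theorem, and is what your inductive approach (a) accomplishes: at step $\gamma$ you fix $g_{\alpha_0\gamma}^0$ at a reference point for one $\alpha_0<\gamma$ and then \emph{define} the remaining $g_{\alpha\gamma}^0$ by the cocycle relation $g_{\alpha\gamma}^0:=g_{\alpha\alpha_0}^0 g_{\alpha_0\gamma}^0$, after which the candidate values of $\rho_\gamma$ on different overlaps $V_\alpha\cap V_\gamma$ agree precisely by the inductive hypothesis $\rho_\alpha g_{\alpha\alpha_0}\rho_{\alpha_0}^{-1}=g_{\alpha\alpha_0}^0$. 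So stick with (a).
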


\begin{cor}[Existence of a flat connection]
\label{cor:Yang_1}
(See \cite[Corollary 1]{Yang_2003pjm}.)
Let $G$ be a compact Lie group and $X$ be a compact, smooth manifold of dimension $d\geq 2$ endowed with a Riemannian metric, $g$. Let $\sU = \{U_\alpha\}_{\alpha \in \sI}$ be a finite open cover of $X$ such that any two points $x, y$ in a nonempty intersection $U_\alpha \cap U_\beta$ can be connected by a $C^1$ curve within $U_\alpha \cap U_\beta$ with length $\leq l$, a uniform constant, and let $\{g_{\alpha\beta}\}$ be a set of smooth transition functions, with respect to $\sU$, for a smooth principal $G$-bundle over $X$. Then there are constants $\eps = \eps(g,G,l, \sU) \in (0, 1]$ and $C = C(g,G,\sU) \in [1, \infty)$ with the following significance. If
\[
\delta := \sup_{\begin{subarray}{c} x\in U_\alpha\cap U_\beta, \\ \alpha, \beta \in \sI \end{subarray}}
\left|\nabla g_{\alpha\beta}(x)\right| \leq \eps,
\]
then we have the same conclusions as in Theorem \ref{thm:Yang_7}. In particular, the bundle defined by $\{g_{\alpha\beta}\}$ is smoothly isomorphic to a flat bundle.
\end{cor}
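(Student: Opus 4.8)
The plan is to deduce Corollary \ref{cor:Yang_1} directly from Theorem \ref{thm:Yang_7}: the hypothesis on $\sup|\nabla g_{\alpha\beta}|$ controls the $C^0$-oscillation of each transition function, which is exactly the smallness quantity $\delta$ appearing in Theorem \ref{thm:Yang_7}. To make the two quantities comparable I would first fix, once and for all, a smooth embedding $\iota\colon G\hookrightarrow\RR^N$ (for instance a faithful unitary representation, available since $G$ is compact). Use $\iota$ both to measure distances $|g_{\alpha\beta}(x)-g_{\alpha\beta}(y)|$ as Euclidean distances in $\RR^N$ and to view $|\nabla g_{\alpha\beta}|$ as the pointwise operator norm of $d(\iota\circ g_{\alpha\beta})\colon T_xX\to\RR^N$ with respect to $g$ on $T_xX$. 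Since $\iota(G)$ is a compact submanifold of $\RR^N$, its intrinsic and chordal metrics are Lipschitz-equivalent, so all of this is independent of the choices up to a fixed constant $c_0=c_0(G)\in[1,\infty)$.

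The first step is the elementary estimate: for all $\alpha,\beta\in\sI$ with $U_{\alpha\beta}\neq\emptyset$ and all $x,y\in U_{\alpha\beta}$, one has
\[
|g_{\alpha\beta}(x)-g_{\alpha\beta}(y)|\le c_0\int_0^1 |\nabla g_{\alpha\beta}|_{\gamma(t)}\,|\dot\gamma(t)|_g\,dt\le c_0\,l\,\sup_{U_{\alpha\beta}}|\nabla g_{\alpha\beta}|,
\]
where $\gamma\colon[0,1]\to U_{\alpha\beta}$ is a $C^1$ path from $x$ to $y$ of $g$-length at most $l$, whose existence is guaranteed by hypothesis; the middle inequality is the fundamental theorem of calculus applied to the $C^1$ path $t\mapsto \iota(g_{\alpha\beta}(\gamma(t)))$ in $\RR^N$ together with the chain rule, and the last is the bound on the length of $\gamma$. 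Next, let $\eps_1=\eps_1(d,g,G,\sU)\in(0,1]$ and $C=C(d,g,G,\sU)\in[1,\infty)$ be the constants supplied by Theorem \ref{thm:Yang_7}. Setting $\eps:=\min\{1,\eps_1/(c_0 l)\}$, which depends only on $d,g,G,l,\sU$, the hypothesis $\sup|\nabla g_{\alpha\beta}|\le\eps$ and the displayed estimate give $\sup_{x,y\in U_{\alpha\beta},\,\alpha,\beta}|g_{\alpha\beta}(x)-g_{\alpha\beta}(y)|\le c_0 l\eps\le\eps_1$, so the smallness hypothesis of Theorem \ref{thm:Yang_7} is met. Applying that theorem then produces the smaller finite open cover $\sV=\{V_\alpha\}_{\alpha\in\sI}$, the constant transition functions $g_{\alpha\beta}^0$, and the smooth maps $\rho_\alpha\colon V_\alpha\to G$ with $\rho_\alpha g_{\alpha\beta}\rho_\beta^{-1}=g_{\alpha\beta}^0$ on $V_\alpha\cap V_\beta$ and $\sup_{x\in V_\alpha,\,\alpha}|\rho_\alpha(x)-\id|<C\,c_0 l\,\eps$ — i.e.\ all the conclusions of Theorem \ref{thm:Yang_7} (the $l$-dependence of the final bound having been absorbed into the choice of $\eps$) — and in particular the bundle defined by $\{g_{\alpha\beta}\}$ is smoothly isomorphic to the flat bundle defined by $\{g_{\alpha\beta}^0\}$.

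There is no essential obstacle in this argument; it is a genuine corollary. The only points requiring a little care are purely bookkeeping: keeping straight the two distinct quantities both denoted $\delta$ (the $C^0$-oscillation of $g_{\alpha\beta}$ in Theorem \ref{thm:Yang_7} versus $\sup|\nabla g_{\alpha\beta}|$ in the corollary), and fixing a background metric or embedding on the target group $G$ so that the pointwise norm $|\nabla g_{\alpha\beta}|$ and the distance $|g_{\alpha\beta}(x)-g_{\alpha\beta}(y)|$ are compatibly normalized; once that is done, the uniform length bound $l$ is precisely the device that converts the derivative bound into the oscillation bound needed to invoke Theorem \ref{thm:Yang_7}.
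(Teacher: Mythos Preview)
Your argument is correct and is precisely the intended one: the uniform length bound $l$ on connecting curves in $U_{\alpha\beta}$ converts the pointwise derivative bound into the $C^0$-oscillation bound required by Theorem \ref{thm:Yang_7}, via the fundamental theorem of calculus along those curves. The paper itself does not supply a proof of this corollary but merely quotes it from \cite[Corollary 1]{Yang_2003pjm}, so there is nothing to compare against; your derivation is the standard and natural one.
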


Corollary \ref{cor:Yang_1} in turn leads to the following $L^\infty$ analogue of the existence of a flat connection in Theorem \ref{thm:Uhlenbeck_Chern_corollary_4-3} given a hypothesis \eqref{eq:Curvature_Lp_small} on smallness of the $L^p$ norm of the curvature.

\begin{thm}[Existence of a flat connection on a principal bundle supporting a $C^\infty$ connection with $L^\infty$-small curvature]
\label{thm:Yang_3}
(See \cite[Theorem 3 and Corollary 2]{Yang_2003pjm}.)
Let $X$ be a compact, smooth manifold of dimension $d\geq 2$ and endowed with a Riemannian metric, $g$, and $G$ be a compact Lie group. Then there is a constant, $\eps=\eps(d, g, G) \in (0,1]$, with the following significance. If $A$ is a $C^\infty$ connection on a $C^\infty$ principal $G$-bundle, $P$, over $X$ such that
\[
\|F_A\|_{L^\infty(X)} \leq \eps,
\]
then $P$ is $C^\infty$ isomorphic to a flat principal $G$-bundle.
\end{thm}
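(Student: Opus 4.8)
The plan is to combine Uhlenbeck's local Coulomb gauge-fixing theorem with the flatness criterion of Corollary \ref{cor:Yang_1}. First I would fix a finite cover of $X$ by geodesic balls $U_\alpha = B_\rho(x_\alpha)$ with $\rho$ chosen smaller than both $\tfrac14\Inj(X,g)$ and the convexity radius of $(X,g)$, so that each intersection $U_\alpha\cap U_\beta$ is geodesically convex and hence any two of its points are joined by a geodesic lying in $U_\alpha\cap U_\beta$ of length $\leq 2\rho$; this verifies the hypothesis of Corollary \ref{cor:Yang_1} with $l = 2\rho$. Fix an exponent $p > d$. Since $\|F_A\|_{L^\infty(X)} \leq \eps$, we have $\|F_A\|_{L^p(B_{2\rho}(x_\alpha))} \leq C(d,g)\eps$, so for $\eps$ small the manifold version of Theorem \ref{thm:Uhlenbeck_Lp_1-3} together with Corollary \ref{cor:Uhlenbeck_theorem_1-3} (which permits $p \geq d$), applied on each geodesic ball, provides a local section $\sigma_\alpha : U_\alpha \to P$ with $d^*a_\alpha = 0$ on $U_\alpha$ for $a_\alpha := \sigma_\alpha^*A$ and with $\|a_\alpha\|_{W^{1,p}(U_\alpha)} \leq C\|F_A\|_{L^p(U_\alpha)} \leq C\eps$. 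Because $A$ is of class $C^\infty$, the gauge transformations, hence the sections $\sigma_\alpha$ and the one-forms $a_\alpha$, are $C^\infty$ by Remark \ref{rmk:Uhlenbeck_theorem_1-3_Wkp}.

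Next I would convert these Sobolev bounds into pointwise bounds. Since $p > d$, the continuous Sobolev embedding $W^{1,p}(U_\alpha) \hookrightarrow C(\overline{U}_\alpha)$ of \cite[Theorem 4.12]{AdamsFournier} gives $\|a_\alpha\|_{C^0(U_\alpha)} \leq C\|a_\alpha\|_{W^{1,p}(U_\alpha)} \leq C\eps$, uniformly in $\alpha$. Let $\{g_{\alpha\beta}\}_{\alpha,\beta\in\sI}$ be the smooth transition functions of $P$ determined by the sections $\{\sigma_\alpha\}$. From the compatibility relation \eqref{eq:Kobayashi_1-1-16_compatibility_local_connection_forms}, $a_\alpha = g_{\alpha\beta}^{-1}a_\beta g_{\alpha\beta} + g_{\alpha\beta}^{-1}dg_{\alpha\beta}$ on $U_\alpha\cap U_\beta$, so that $\nabla g_{\alpha\beta} = g_{\alpha\beta}a_\alpha - a_\beta g_{\alpha\beta}$, and using that $G$ acts by bounded operators (say through an orthogonal representation as in Remark \ref{rmk:Uhlenbeck_Chern_corollary_4-3_for_vector_bundles}) we obtain the uniform pointwise estimate $|\nabla g_{\alpha\beta}| \leq c(G)\left(|a_\alpha| + |a_\beta|\right) \leq C\eps$ on $U_\alpha\cap U_\beta$, with $C = C(d,g,G,p,\sU) \in [1,\infty)$.

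Finally I would invoke Corollary \ref{cor:Yang_1} for the smooth transition functions $\{g_{\alpha\beta}\}$ relative to the cover $\sU = \{U_\alpha\}_{\alpha\in\sI}$: choosing $\eps = \eps(d,g,G) \in (0,1]$ small enough that $C\eps$ lies below the threshold $\eps(d,g,G,l,\sU)$ appearing there (recall $l = 2\rho$ depends only on $(X,g)$), we conclude that the bundle $P$ defined by $\{g_{\alpha\beta}\}$ is smoothly isomorphic to a flat principal $G$-bundle. This is the assertion of Theorem \ref{thm:Yang_3}; in particular, via the construction in Section \ref{subsec:Flat_bundles}, $P$ carries a flat connection, recovering Item \eqref{item:Uhlenbeck_Chern_corollary_4-3_existence_flat_connection} of Theorem \ref{thm:Uhlenbeck_Chern_corollary_4-3} under the stronger hypothesis $\|F_A\|_{L^\infty(X)} \leq \eps$.

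\textbf{Main obstacle.} The step requiring the most care is the passage from the $L^\infty$ (equivalently $L^p$ for every $p$) smallness of $F_A$ to a genuine pointwise bound on the Coulomb potentials $a_\alpha$; this is exactly where it is essential that $p$ may be taken strictly larger than $d$ in Corollary \ref{cor:Uhlenbeck_theorem_1-3}, so that $W^{1,p}\hookrightarrow C^0$, and it is also where the constants acquire their dependence on $\Vol_g(X)$, on bounds for $\Riem_g$ over the balls $B_{2\rho}(x_\alpha)$, and on $\Inj(X,g)$. A secondary, routine point is the geometric bookkeeping ensuring the short-connecting-curve hypothesis of Corollary \ref{cor:Yang_1}; taking $\rho$ below the convexity radius of $(X,g)$ disposes of it cleanly.
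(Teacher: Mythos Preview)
Your argument is correct, but it takes a different route from the one the paper attributes to Yang. The paper explicitly notes in Remark~\ref{rmk:Compare_Yang_Theorem_3_and_Uhlenbeck_Chern_Corollary_4-3} that the whole point of Theorem~\ref{thm:Yang_3} is that, under the stronger $L^\infty$ hypothesis, one can avoid Coulomb gauges entirely: Yang obtains the pointwise smallness of the local connection one-forms $a_\alpha$ by putting $A$ in \emph{radial} (exponential) gauge over each geodesic ball and invoking the elementary $L^\infty$ estimate \cite[Lemma~2.1]{UhlRem}, which follows by integrating $F_A$ along radial geodesics. Your approach instead uses the Coulomb gauge of Corollary~\ref{cor:Uhlenbeck_theorem_1-3} with $p>d$ and then the Sobolev embedding $W^{1,p}\hookrightarrow C^0$ to recover the same pointwise bound; from that point on (the estimate $|\nabla g_{\alpha\beta}|\leq c(|a_\alpha|+|a_\beta|)$ and the appeal to Corollary~\ref{cor:Yang_1}) the two arguments coincide.

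What each approach buys: Yang's radial-gauge route is genuinely more elementary --- no elliptic PDE, no Implicit Function Theorem, no Sobolev embedding --- and this is precisely why the result is singled out in the appendix as an alternative, lower-technology proof of Item~\eqref{item:Uhlenbeck_Chern_corollary_4-3_existence_flat_connection} in Theorem~\ref{thm:Uhlenbeck_Chern_corollary_4-3}. Your route, while heavier, has the advantage that it reuses machinery already developed in the paper and makes transparent that the $L^\infty$ hypothesis is being used only to guarantee $L^p$ smallness for $p>d$; in that sense it clarifies the relationship between Theorems~\ref{thm:Uhlenbeck_Chern_corollary_4-3} and~\ref{thm:Yang_3} rather than providing an independent argument.
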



\begin{rmk}[Comparison with Theorem \ref{thm:Uhlenbeck_Chern_corollary_4-3}]
\label{rmk:Compare_Yang_Theorem_3_and_Uhlenbeck_Chern_Corollary_4-3}
Yang had suggested in \cite[Section 1]{Yang_2003pjm} that it might be possible to relax the $L^\infty$ condition on $F_A$ in his Theorem \ref{thm:Yang_3} to an $L^p$ condition, for some $p<\infty$. In fact, such a conclusion (for any $p > d/2$) had been proved as one part of the Theorem \ref{thm:Uhlenbeck_Chern_corollary_4-3} due to Uhlenbeck. However, the proof of Theorem \ref{thm:Yang_3} is more elementary than that of Theorem \ref{thm:Uhlenbeck_Chern_corollary_4-3} since it does not rely (explicitly or implicitly) on the existence of local Coulomb gauges and Sobolev estimates for local connection one-forms, $a = A-\Theta$, over a ball (in Coulomb gauge with respect to the product connection, $\Theta$) in terms of the $L^p$ norm of the curvature, $F_A$. Instead, Yang only uses the simple $L^\infty$ estimate for $A$ in radial gauge in terms of the $L^\infty$ estimate for $F_A$ \cite[Lemma 2.1]{UhlRem} in his proof of Theorem \ref{thm:Yang_3}.
\end{rmk}

\begin{rmk}[On Theorem \ref{thm:Yang_3} for vector bundles]
\label{rmk:Yang_Theorem_3_for_vector_bundles}
For consistency with the rest of our article, we have converted \cite[Theorem 3 and Corollary 2]{Yang_2003pjm} to the equivalent setting of a principal $G$-bundle rather than its original setting of a vector bundle $E$ with compact Lie structure group, $G$, and an orthogonal representation, $G\hookrightarrow \SO(l)$, for some integer $l\geq 2$ as in Yang \cite[Section 1]{Yang_2003pjm}.
\end{rmk}

\subsection{A priori interior estimate for the curvature of a Yang--Mills connection in dimension two}
\label{subsec:Apriori_estimate_curvature_Yang-Mills_connection_d_is_2}
As we noted in Section \ref{subsec:Apriori_estimate_curvature_Yang-Mills_connection}, Theorem \ref{thm:Uhlenbeck_3-5} does not cover the case $d=2$ but the forthcoming Lemma \ref{lem:Smith_theorem_4-1} provides an \apriori estimate that is adequate for the purposes of our proof of Theorem \ref{mainthm:Main} in the case $d=2$. Recall from \cite[p. 33]{UhlLp} that if $A$ is a $W^{1,p}$ Yang--Mills connection (for $p \in (1,\infty)$ obeying $p\geq d/2$), then $A$ is gauge-equivalent to a smooth Yang--Mills connection. The constant $C$ appearing in the statement of Lemma \ref{lem:Smith_theorem_4-1} can be computed explicitly in terms of Sobolev embedding norms for a ball of radius $r$ in $\RR^2$ (see \cite{AdamsFournier}) but we shall not require that refinement in this article.

\begin{lem}[\Apriori estimate for the curvature of a Yang--Mills connection in dimension two]
\label{lem:Smith_theorem_4-1}
(Compare \cite[Theorem 4.1]{Smith_1990}.)
If $p \in [1,\infty)$ and $r>0$ are constants, then there is a constant, $C=C(p,r) \in [1,\infty)$, with the following significance. Let $G$ be a compact Lie group and $A$ be a Yang--Mills connection with respect to the standard Euclidean metric on $B_r\times G$, where $B_r \subset \RR^2$ is the open ball with center at the origin in $\RR^2$ and radius $r>0$. If $F_A \in L^1(B_r;\Lambda^2\otimes\fg)$, then
\begin{equation}
\label{thm:Uhlenbeck_3-5_Lp_norm_FA_leq_constant_L1_norm_FA_ball}
\|F_A\|_{L^p(B_r)} \leq C\|F_A\|_{L^1(B_r)}.
\end{equation}
\end{lem}

\begin{proof}
We adapt the proof of  \cite[Theorem 4.1]{Smith_1990}. Noting that $*F_A \in \Omega^0(B_r;\fg)$ when $d=2$, the Kato Inequality \cite[Equation (6.20)]{FU} and the Yang--Mills equation for $A$ (see Section \ref{sec:Preliminaries}) imply that
\begin{equation}
\label{eq:Kato-Yang-Mills}
|d|F_A|| = |d|*F_A|| \leq |d_A*F_A| = |d_A^*F_A| = 0 \quad\text{on } B_r.
\end{equation}
By hypothesis, $|F_A| \in L^1(B_r)$ and clearly $\nabla |F_A| \in L^1(B_r)$, so $|F_A| \in W^{1,1}(B_r)$. The Sobolev Embedding \cite[Theorem 4.12, Part C]{AdamsFournier} (since $1^*=2$ for $d=2$) ensures that $W^{1,1}(B_r) \subset L^2(B_r)$ and so $|F_A| \in L^2(B_r)$. But then $|F_A| \in W^{1,2}(B_r)$ since $\nabla |F_A| \in L^2(B_r)$. The Sobolev Embedding \cite[Theorem 4.12, Part B]{AdamsFournier} (for $d=2$) implies that $W^{1,2}(B_r) \subset L^p(B_r)$ for any $p \in [1,\infty)$. We now combine these observations to give
\begin{align*}
\|F_A\|_{L^p(B_r)} &\leq C\|F_A\|_{W^{1,2}(B_r)}  \quad\text{(by \cite[Theorem 4.12, Part B]{AdamsFournier})}
\\
&= C\|F_A\|_{L^2(B_r)} \quad\text{(by \eqref{eq:Kato-Yang-Mills})}
\\
&\leq C\|F_A\|_{W^{1,1}(B_r)} \quad\text{(by \cite[Theorem 4.12, Part C]{AdamsFournier})}
\\
&= C\|F_A\|_{L^1(B_r)} \quad\text{(by \eqref{eq:Kato-Yang-Mills})},
\end{align*}
as desired.
\end{proof}

Lemma \ref{lem:Smith_theorem_4-1} serves as a replacement for Theorem \ref{thm:Uhlenbeck_3-5} when $d=2$ and in our application, we use the following immediate corollary and analogue of Corollary \ref{cor:Uhlenbeck_3-5_manifold}.

\begin{cor}[\Apriori estimate for the curvature of a Yang--Mills connection over a closed two-dimensional manifold]
\label{cor:Smith_theorem_4-1_manifold}
Let $X$ be a closed, smooth, two-dimensional manifold endowed with a Riemannian metric, $g$, and $p \in [1,\infty)$ be a constant. Then there is a constant, $K_p=K_p(g,p) \in [1,\infty)$, with the following significance. Let $G$ be a compact Lie group and $A$ be a smooth Yang--Mills connection with respect to the metric, $g$, on a smooth principal $G$-bundle $P$ over $X$. Then
\begin{equation}
\label{eq:Uhlenbeck_3-5_Lp_norm_FA_leq_constant_L1_norm_FA_manifold}
\|F_A\|_{L^p(X)} \leq K_p\|F_A\|_{L^1(X)}.
\end{equation}
\end{cor}

\section{Corrigenda}
\label{sec:Corrections}
In this Appendix, we give a cumulative list of the mathematical corrections to the previously published version \cite{Feehan_yangmillsenergygapflat_aim} of our article that are provided in this version. We provide a fuller discussion of these corrections in \cite{Feehan_yangmillsenergygapflat_corrigendum}. Corrections to small typographical errors are not noted. 

The most significant correction involves replacement of the role of \cite[Corollary 4.3]{UhlChern} due to Uhlenbeck by that of \cite[Theorems 1 and 9]{Feehan_nonlinear_uhlenbeck_estimate} due to the author. We compare Theorem \ref{thm:Uhlenbeck_Chern_corollary_4-3} and \cite[Corollary 4.3]{UhlChern} in Section \ref{sec:Uhlenbeck_approach_existence_flat_connection_and_apriori_estimate}. The replacement of appeals to \cite[Corollary 4.3]{UhlChern} by appeals to Theorem \ref{thm:Uhlenbeck_Chern_corollary_4-3} in our proofs of Corollary \ref{cor:Uhlenbeck_Chern_A_Yang-Mills_FA_Ldover2_small} and Theorem \ref{mainthm:Main} in Section \ref{sec:Completion_proof_main_theorem} require only slight changes to those proofs and those were the only applications of \cite[Corollary 4.3]{UhlChern}. We list below the remaining corrections to \cite{Feehan_yangmillsenergygapflat_aim}:

\begin{itemize}
  \item The new hypothesis $d\geq 3$ corrects the previous hypothesis $d\geq 2$ in Theorem \ref{thm:Uhlenbeck_Lp_1-3} and Theorem \ref{thm:Uhlenbeck_3-5}.
  \item Corollaries \ref{cor:Uhlenbeck_theorem_1-3_p_lessthan_dover2} and \ref{cor:Uhlenbeck_theorem_1-3_p_geq_d} correct and replace our our \cite[Corollary 4.4]{Feehan_yangmillsenergygapflat_aim}, which should have included the restriction $p>1$ when $d=2$.
  \item An explanation for the correction of the hypothesis $d\geq 2$ to $d\geq 3$ in Theorem \ref{thm:Uhlenbeck_3-5} is added in the last paragraph of Section \ref{subsec:Apriori_estimate_curvature_Yang-Mills_connection}
  \item The proof of Corollary \ref{cor:Uhlenbeck_Chern_A_Yang-Mills_FA_Ldover2_small} is slightly modified to correct for the case $d=2$.
  \item Theorem \ref{thm:Uhlenbeck_3-5} and Corollary \ref{cor:Uhlenbeck_3-5_manifold} do not cover the case $d=2$ but the added Lemma \ref{lem:Smith_theorem_4-1} and Corollary \ref{cor:Smith_theorem_4-1_manifold} in the new Section \ref{subsec:Apriori_estimate_curvature_Yang-Mills_connection_d_is_2} provide an alternative \apriori estimate covering the case $d=2$ that is sufficient for the purposes of this article.
\end{itemize}

%
%

\bibliography{master,mfpde}
\bibliographystyle{amsplain}

\end{document}